\newcommand*\patchAmsMathEnvironmentForLineno[1]{%
  \expandafter\let\csname old#1\expandafter\endcsname\csname #1\endcsname
  \expandafter\let\csname oldend#1\expandafter\endcsname\csname end#1\endcsname
  \renewenvironment{#1}%
     {\linenomath\csname old#1\endcsname}%
     {\csname oldend#1\endcsname\endlinenomath}}%
\newcommand*\patchBothAmsMathEnvironmentsForLineno[1]{%
  \patchAmsMathEnvironmentForLineno{#1}%
  \patchAmsMathEnvironmentForLineno{#1*}}%
\theoremstyle{plain}
\newtheorem*{theorem*}{\indent Theorem}
\newtheorem{theorem}{\indent Theorem}[section]
\newtheorem{corollary}[theorem]{\indent Corollary}
\newtheorem{lemma}[theorem]{\indent Lemma}
\newtheorem{proposition}[theorem]{\indent Proposition}
\theoremstyle{definition}
\newtheorem*{example}{\indent Example}
\theoremstyle{remark}
\newtheorem{rem}[theorem]{\indent Remark}
\DeclareMathOperator{\sgn}{sign}
\DeclareMathOperator{\supp}{supp}
\DeclareMathOperator{\linspan}{span}
\newcommand{\iten}{\ensuremath{\widehat{\otimes}_\varepsilon}}
\newcommand{\pten}{\ensuremath{\widehat{\otimes}_\pi}}
\newcommand{\aten}{\ensuremath{\widehat{\otimes}_\alpha}}
\title[Octahedral norms in tensor products of Banach spaces]{Octahedral norms in tensor products of Banach spaces}
\author[J. Langemets]{Johann Langemets}
\address[J. Langemets]{Institute of Mathematics and Statistics, University of Tartu, J. Liivi 2, 50409 Tartu, Estonia} \email{johann.langemets@ut.ee}
\thanks{The research of J.~Langemets was supported by institutional research funding IUT20-57 of the Estonian Ministry of Education and Research.}
\author[V.~Lima]{Vegard Lima}
\address[V.~Lima] {NTNU, Norwegian University of Science and
  Technology, Aalesund, Postboks 1517, N-6025 {\AA}lesund Norway.}
\email{Vegard.Lima@ntnu.no}
\author[A. Rueda Zoca]{Abraham Rueda Zoca}
\address[A. Rueda Zoca]{Universidad de Granada, Facultad de Ciencias.
Departamento de An\'{a}lisis Matem\'{a}tico, 18071-Granada
(Spain)} \email{arz0001@correo.ugr.es}\thanks{The research of A.~Rueda Zoca was supported by a research grant Contratos predoctorales FPU del Plan
Propio del Vicerrectorado de Investigaci{\'o}n y Transferencia de la Universidad de Granada and by Junta de Andaluc\'ia Grants FQM-0199.}\urladdr{\url{https://arzenglish.wordpress.com}}
\keywords {Octahedral norms, Projective tensor product, Injective tensor product,
Diameter 2 property}
\subjclass[2010]{46B20, 46B04, 46B25, 46B28}
\begin{document}

\begin{abstract}
  We continue the investigation of the behaviour of octahedral norms in tensor
  products of Banach spaces. Firstly, we will prove the existence of a
  Banach space $Y$ such that the injective tensor products $l_1\iten Y$
  and $L_1\iten Y$ both fail to have an octahedral norm, which solves two open problems from the literature. Secondly, we will show that in the presence of the metric approximation property octahedrality is preserved from a non-reflexive $L$-embedded Banach space taking projective tensor products with an arbitrary Banach
  space. 
\end{abstract}

\maketitle

\section{Introduction}\label{sectionintro}
\bigskip

According to \cite[Remark~II.5.2]{G-octa}, the norm of a Banach space $X$ is
\emph{octahedral} if, for every finite-dimensional subspace $E$ of $X$
and every $\varepsilon > 0$, there exists $y \in S_X$ such that
\begin{equation*}
  \| x + \lambda y \| \ge (1-\varepsilon)(\|x\|+|\lambda|)
  \ \mbox{for every } x \in E\mbox{ and every }\lambda \in \mathbb{R}.
\end{equation*}

The starting point of dual characterisations of octahedral norms was in \cite{dev},
where the author proved that if a Banach space $X$ has an octahedral norm then the
dual $X^*$ enjoys the \textit{weak$^*$ strong diameter two property} ($w^*$-SD2P),
i.e. every convex combination of weak-star slices of the dual unit ball has diameter two.
The converse of this result was proved in \cite[Theorem~2.1]{blr0}
(see also \cite{hlp0, lan}). It follows that a Banach space has the
\textit{strong diameter two property} (SD2P)
(i.e. every convex combination of slices of the unit ball has diameter two)
if, and only if, the dual norm is octahedral.
This characterisation motivated a lot of research on octahedral norms
in connection with the so called ``big slice phenomenon'' and it
will be used repeatedly without reference throughout this text.

The connection between the SD2P and octahedrality was the basis for
new results related to the big slice phenomenon in tensor product
spaces and, by duality, in spaces of operators.  Indeed, in
\cite[Theorem~2.5]{blr} it was proved that given two Banach spaces $X$
and $Y$ such that the norms of $X^*$ and $Y$ are octahedral then the
norm of every closed subspace $H$ of $L(X,Y)$ which contains
finite-rank operators is octahedral. As a corollary, the projective
tensor product of two spaces having the SD2P enjoys the SD2P, a result
which improved the main results of \cite{abr} and gave a partial
answer to \cite[Question~(b)]{aln}, where it was asked how diameter
two properties are preserved by tensor product spaces. However, it
remained an open problem whether the assumption of the SD2P on one
of the factor can be eliminated \cite[p.~177]{blr}.  In
\cite[Theorem~2.2]{llr} a result similar to \cite[Theorem~2.5]{blr}
appeared, proving that octahedrality is preserved by taking injective
tensor products from both factors. But the question whether the
assumption on one of the factor can be removed remained open
\cite[Question~4.1]{llr} (see also \cite[p.~5]{hlp}).

Dually, it is a natural question how octahedrality is preserved by
projective tensor products.  There are several examples
\cite[Examples]{llr} which suggest that it should be sufficient to
assume octahedrality on one of the factors for the projective tensor
product to have an octahedral norm, and this was posed as an open
problem \cite[Question~4.4]{llr}. Even the particular case of
Lipschitz-free spaces have been considered \cite[Question~2]{blr2}.

The aim of this note is to continue studying octahedrality in tensor product spaces and to give some complete and some partial
answers to the above questions. We
start by giving definitions and preliminary results in Section
\ref{sectionpreli}.  In Section \ref{sectioninje} we will prove that
there are Banach spaces $Y$ such that the injective tensor products
$\ell_1\iten Y$ and $L_1\iten Y$ fail to have an octahedral norm.
Indeed, we will characterise in Theorem \ref{summary} when the spaces
$X\iten Y$ have an octahedral norm whenever $X$ is either $\ell_1$ or
$L_1$ and $Y$ is either $\ell_p$ or $\ell_p^n$ for $1\leq p\leq
\infty$ and $n\geq 2$.  This will give a negative answer to
\cite[Question~4.1]{llr} and to a question from \cite[p.~177]{blr}. Moreover, Theorem \ref{summary} also gives a complete answer to the problem of how the SD2P is preserved by projective tensor products, posed in \cite[Question (b)]{aln}.
In Section~\ref{sectionproje} we study octahedrality of projective tensor
products. In Theorem \ref{teolsumaoctapten} we will prove that
octahedrality is preserved from one of the factors by taking projective
tensor products in presence of the metric approximation property whenever one of the factors in a non-reflexive $L$-embedded Banach space, which provides a partial positive answer to
\cite[Question~4.4]{llr}.

\section{Notation and preliminaries}\label{sectionpreli}
\bigskip

We will only consider real and non-zero Banach spaces and we follow
standard Banach space notation as used in e.g. \cite{alka}. Given a
Banach space $X$ we denote the closed unit ball by $B_X$ and the
closed unit sphere by $S_X$.
The Banach space of bounded linear operators from
$X$ to a Banach space $Y$ is denoted by $L(X,Y)$,
while the subspace of finite rank operators is denoted by $F(X,Y)$.
By $L_1$ we mean the Banach space $L_1[0,1]$.
By $p^*$ we denote the conjugate exponent of $1 \le p \le \infty$
defined by $\frac{1}{p} + \frac{1}{p^*} = 1$.

Let $I$ be the identity operator on a Banach space $X$.
Recall that $X$ has the \emph{Daugavet property} if the equation
\begin{equation*}
  \| I + T \| = 1 + \|T\|
\end{equation*}
holds for every rank one operator $T$ on $X$.
Note that if $X$ has the Daugavet property, then the norms
of both $X$ and $X^*$ are octahedral \cite[Corollary~2.5]{blr0}.

Given two Banach spaces $X$ and $Y$ we will denote by $X \iten Y$ the injective,
and by $X \pten Y$ the projective, tensor product of $X$ and $Y$.
Our main reference for the theory of tensor products
of Banach spaces is \cite{rya}.

A Banach space $X$ has the \emph{diameter two property} (D2P)
if every non-empty relatively weakly open subset of $B_X$
has diameter two. $X$ has the D2P if and only if
the norm of the dual space is \emph{weakly octahedral}.
For the definition of weak octahedrality and
its relation to D2P we refer to \cite{hlp0, lan}.

According to \cite[Definition III.1.1]{hww}, a Banach space $X$ is said to be an \textit{$L$-embedded Banach space} if there exists a subspace $Z\subseteq X^{**}$ such that $X^{**}=X\oplus_1 Z$. Note that from the Principle of Local Reflexivity, non-reflexive $L$-embedded Banach space have an octahedral norm.

In Section~\ref{sectionproje} the theory of almost isometric ideals
will play an important role in our results about octahedrality in
projective tensor products.
Let $Z$ be a subspace of a Banach space $X$.
We say that $Z$ is an \emph{almost isometric ideal} (ai-ideal) in $X$ if
$X$ is locally complemented in $Z$ by almost isometries.
This means that for each $\varepsilon>0$ and for each
finite-dimensional subspace $E\subseteq X$ there exists a linear
operator $T:E\to Z$ satisfying
\begin{enumerate}
\item\label{item:ai-1}
  $T(e)=e$ for each $e\in E\cap Z$, and
\item\label{item:ai-2}
  $(1-\varepsilon) \Vert e \Vert \leq \Vert T(e)\Vert\leq
  (1+\varepsilon) \Vert e \Vert$
  for each $e\in E$,
\end{enumerate}
i.e. $T$ is a $(1+\varepsilon)$ isometry fixing the elements of $E$.
If the $T$'s satisfy only (\ref{item:ai-1}) and the right-hand side of
(\ref{item:ai-2}) we get the well-known
concept of $Z$ being an \emph{ideal} in $X$ \cite{GKS}.

Note that the Principle of Local Reflexivity means that $X$ is an ai-ideal in $X^{**}$
for every Banach space $X$. Moreover, the Daugavet property, octahedrality and
all of the diameter two properties are inherited by ai-ideals
(see \cite{abrahamsen} and \cite{aln2}).

Let $X$ be a Banach space and let $\alpha$ be a tensor norm.
By \cite[Proposition~6.4]{rya}
$X \aten Y$ is a subspace of $X^{**} \aten Y$
for any Banach space $Y$.
A similar argument shows that this
result can be generalised to (ai-)ideals,
i.e. $Z \aten Y$ is a subspace of $X \aten Y$
for any Banach space $Y$ whenever $Z$ is an ideal in $X$.
In Section~\ref{sectionproje} we will need the following
version of this result:

\begin{proposition}\label{subprojeai}
  Let $Z$ be an (ai-)ideal in $X$ and let $Y$ be a Banach space.
  Then $Z \pten Y$ is a subspace of $X \pten Y$.
\end{proposition}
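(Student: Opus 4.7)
The plan is to show that the natural contraction $j \colon Z \pten Y \to X \pten Y$ induced by the inclusion $Z \hookrightarrow X$ is in fact an isometric embedding. One direction is immediate: since $Z \hookrightarrow X$ is a norm-one inclusion, tensoring with $I_Y$ and completing yields $\|u\|_{X \pten Y} \leq \|u\|_{Z \pten Y}$ for every $u \in Z \otimes Y$. All the work lies in proving the reverse inequality.

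To this end, I would fix $u \in Z \otimes Y$, write it in some fixed way as $u = \sum_{i=1}^m z_i \otimes y_i$ with $z_i \in Z$ and $y_i \in Y$, and, for a given $\varepsilon>0$, choose a near-optimal representation $u = \sum_{j=1}^n x_j \otimes v_j$ in $X \otimes Y$ with $\sum_j \|x_j\|\|v_j\| < \|u\|_{X \pten Y} + \varepsilon$. The key idea is to use the (ai-)ideal structure to ``pull'' the $x_j$ back into $Z$ without disturbing $u$. Setting $E = \linspan\{x_1,\dots,x_n,z_1,\dots,z_m\} \subseteq X$, the (ai-)ideal definition supplies a linear operator $T \colon E \to Z$ that fixes every element of $E \cap Z$ and satisfies $\|T(e)\| \leq (1+\varepsilon)\|e\|$ for every $e \in E$.

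From here two observations finish the argument. First, because $T$ fixes each $z_i$, the map $T \otimes I_Y$ acts as the identity on $u$. Secondly, applying $T \otimes I_Y$ to the $X$-representation produces $\sum_j T(x_j) \otimes v_j$, an element of $Z \otimes Y$ whose projective norm (in $Z \pten Y$) is at most $(1+\varepsilon)\sum_j \|x_j\|\|v_j\|$. Equating the two expressions for $(T \otimes I_Y)(u)$ gives
\[
  \|u\|_{Z \pten Y} \leq (1+\varepsilon)\bigl(\|u\|_{X \pten Y} + \varepsilon\bigr),
\]
and letting $\varepsilon \to 0$ delivers the missing inequality.

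There is no real obstacle: the whole argument is a tensorial version of the standard local-reflexivity style proof (compare \cite[Proposition~6.4]{rya} for the case $X \subseteq X^{**}$). The one point worth noting is that only the upper estimate $\|T(e)\| \leq (1+\varepsilon)\|e\|$ enters the computation; the lower estimate from the ai-ideal definition is never used, which is why the conclusion holds uniformly for ideals and ai-ideals.
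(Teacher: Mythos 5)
Your argument is correct and is exactly the ``similar argument'' the paper alludes to (it states Proposition~\ref{subprojeai} without proof, pointing to the local-reflexivity-style proof of \cite[Proposition~6.4]{rya}): pull a near-optimal $X\otimes Y$ representation back into $Z$ with the local operator $T$ fixing $E\cap Z$, so that $T\otimes I_Y$ leaves $u$ unchanged while controlling the projective norm. Your closing observation that only the upper bound on $\|T(e)\|$ is needed, so the statement holds for ideals and not just ai-ideals, matches the paper's phrasing of the result.
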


With an extra assumption we even get an ai-ideal.

\begin{proposition}
  Let $X$ and $Y$ be Banach spaces.
  If $L(Y,X^*)$ is norming for $X^{**} \pten Y$,
  then $X \pten Y$ is an ai-ideal in $X^{**} \pten Y$.
\end{proposition}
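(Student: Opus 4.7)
The plan is to identify $X^{**}\pten Y$ isometrically with a subspace of the bidual $(X\pten Y)^{**}$ using the norming hypothesis, and then to reduce the claim to the Principle of Local Reflexivity applied to the Banach space $X\pten Y$.

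First, I would consider the canonical map $\iota : X^{**}\pten Y \to (X\pten Y)^{**}$ defined, via the standard identification $(X\pten Y)^{*} = L(Y,X^{*})$, on algebraic tensors by
$$\iota\Bigl(\sum_{i} x_i^{**}\otimes y_i\Bigr)(\Phi) = \sum_{i} \langle x_i^{**}, \Phi(y_i)\rangle, \qquad \Phi \in L(Y,X^{*}).$$
A routine estimate against the infimum defining the projective norm gives $\|\iota\|\le 1$, so $\iota$ extends by continuity to all of $X^{**}\pten Y$, and on elementary tensors $x\otimes y$ with $x\in X$ it agrees with the canonical embedding of $X\pten Y$ into its bidual. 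The norming hypothesis asserts precisely that
$$\|u\|_{X^{**}\pten Y} = \sup_{\Phi\in B_{L(Y,X^{*})}} |\iota(u)(\Phi)| = \|\iota(u)\|_{(X\pten Y)^{**}},$$
so $\iota$ is an isometric embedding. Thus we obtain a chain of isometric inclusions
$$X\pten Y \hookrightarrow X^{**}\pten Y \stackrel{\iota}{\hookrightarrow} (X\pten Y)^{**}.$$

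Second, by the Principle of Local Reflexivity, recalled in Section \ref{sectionpreli}, the Banach space $X\pten Y$ is an ai-ideal in its bidual $(X\pten Y)^{**}$. Finally, I would invoke the elementary observation that the ai-ideal property is inherited by any intermediate isometric subspace: if $A$ is an ai-ideal in $C$ and $A\subseteq B \subseteq C$ is a chain of isometric inclusions, then $A$ is an ai-ideal in $B$, because for any finite-dimensional $E\subseteq B$ the witnessing almost-isometry $T:E\to A$ provided by the ai-ideal property of $A$ in $C$ automatically satisfies the defining conditions with respect to $B$, since both the norm on $E$ and the intersection $E\cap A$ are unchanged under the isometric inclusion $B\hookrightarrow C$. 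Applying this with $A = X\pten Y$, $B = X^{**}\pten Y$, and $C = (X\pten Y)^{**}$ completes the proof.

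The only substantive step is the verification that $\iota$ is isometric, which is where -- and only where -- the norming hypothesis enters; the remaining arguments are formal extensions of the Principle of Local Reflexivity and of the definition of an ai-ideal.
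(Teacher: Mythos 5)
Your proof is correct, but it takes a genuinely different route from the paper's. The paper works entirely on the dual side: it defines $\phi : L(Y,X^*) \to L(X^{**},Y^*)$ by $\phi(T) := T^*$, observes that $\phi$ is a Hahn--Banach extension operator from $(X\pten Y)^*$ into $(X^{**}\pten Y)^*$ whose range is norming precisely by the hypothesis, and then invokes \cite[Proposition~2.1]{aln2} (a strict ideal is an ai-ideal) to conclude in three lines. You instead work on the bidual side: you read the norming hypothesis as saying that the canonical map $\iota : X^{**}\pten Y \to (X\pten Y)^{**}$ is an isometry, so that $X^{**}\pten Y$ sits isometrically between $X\pten Y$ and its bidual, and you finish with the Principle of Local Reflexivity together with the (correct, and easy) observation that the ai-ideal property passes to intermediate isometric subspaces. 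The two arguments are dual to one another --- your $\iota$ is essentially the pairing induced by the paper's $\phi$ --- and in effect you have unwound the proof of \cite[Proposition~2.1]{aln2} in this particular instance. What your version buys is self-containedness, replacing the citation by a short direct argument from the PLR as stated in Section~\ref{sectionpreli}; what the paper's version buys is brevity and the explicit identification of $X\pten Y$ as a strict ideal, which is the form in which the hypothesis is verified in Proposition~\ref{normingaitensor}. All of your individual steps (well-definedness and contractivity of $\iota$, the identification of the composite $X\pten Y \to X^{**}\pten Y \to (X\pten Y)^{**}$ with the canonical embedding, and the heredity of ai-ideals to intermediate subspaces) are sound.
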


\begin{proof}
  We have $(X \pten Y)^* = L(X,Y^*)=L(Y,X^*)$
  and $(X^{**} \pten Y)^* = L(X^{**},Y^*)$.

  Define an operator
  $\phi : L(Y,X^*) \to L(X^{**},Y^*)$ by
  $\phi(T) := T^{*}$.
  We have $\langle \phi(T), u \rangle = \langle T,u \rangle$
  for $u \in X \pten Y$ and $\|\phi\| \le 1$,
  thus $\phi$ is a Hahn-Banach extension operator.
  By assumption $\phi(L(Y,X^{*}))$ is norming, so by
  \cite[Proposition~2.1]{aln2} we have that
  $X \pten Y$ is an ai-ideal in $X^{**} \pten Y$.
\end{proof}

Recall that a Banach space $X$ has the \emph{metric approximation property} (MAP)
if there exists a net $(S_\alpha)$ in $F(X,X)$ such that
$S_\alpha x \to x$ for all $x \in X$.
The MAP allows us give examples of spaces where
the above proposition applies.

\begin{proposition}\label{normingaitensor}
  Let $X$ and $Y$ be Banach spaces.
  If either $X^{**}$ or $Y$ has the MAP, then
  $F(Y,X^*) \subset L(Y,X^*)$ is norming for $X^{**} \pten Y$.
  In particular,
  $X \pten Y$ is an ai-ideal in $X^{**} \pten Y$.
\end{proposition}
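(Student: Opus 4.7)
The plan is to verify the norming property directly: for every $u \in X^{**} \pten Y$ and every $\varepsilon > 0$, we produce $T \in F(Y, X^*)$ with $\|T\| \le 1$ and $\langle T, u\rangle \ge \|u\|_\pi - \varepsilon$. By density of finite sums of elementary tensors we may assume $u = \sum_{i=1}^n x_i^{**} \otimes y_i$, and by Hahn--Banach we fix $T_0 \in L(X^{**}, Y^*) = (X^{**} \pten Y)^*$ of norm at most $1$ with $\langle T_0, u\rangle \ge \|u\|_\pi - \varepsilon/3$. Write $\psi := T_0^{*}|_Y : Y \to X^{***}$, so $\|\psi\| \le 1$.

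The key idea is that each MAP hypothesis lets us replace $\psi$ by a finite-rank operator $\psi_\alpha : Y \to X^{***}$, after which the Principle of Local Reflexivity applied to $X^*$ pushes the finite-dimensional range of $\psi_\alpha$ from $X^{***}$ back into $X^*$ almost isometrically while preserving pairings with the $x_i^{**}$. Concretely, if $Y$ has the MAP via $(S_\alpha) \subset F(Y,Y)$ with $\|S_\alpha\| \le 1$ and $S_\alpha y \to y$, set $\psi_\alpha := \psi \circ S_\alpha$; if instead $X^{**}$ has the MAP via $(S_\alpha) \subset F(X^{**}, X^{**})$, set $\psi_\alpha := S_\alpha^{*} \circ \psi$, which is still finite-rank because $S_\alpha^*$ is. In both cases $\|\psi_\alpha\| \le 1$ and $F_\alpha := \psi_\alpha(Y) \subset X^{***}$ is finite-dimensional. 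PLR applied to $X^*$ with $F_\alpha$ and $G := \linspan\{x_1^{**}, \dots, x_n^{**}\}$ then yields a $(1+\varepsilon)$-isomorphism $R_\alpha : F_\alpha \to X^*$ with $x_i^{**}(R_\alpha f) = f(x_i^{**})$ for every $i$ and every $f \in F_\alpha$.

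Set $T_\alpha := R_\alpha \circ \psi_\alpha \in F(Y, X^*)$, so $\|T_\alpha\| \le 1 + \varepsilon$. The PLR pairing property gives
\[
  \langle T_\alpha, u\rangle \;=\; \sum_{i=1}^n x_i^{**}(R_\alpha \psi_\alpha y_i) \;=\; \sum_{i=1}^n (\psi_\alpha y_i)(x_i^{**}),
\]
which equals $\sum_i T_0(x_i^{**})(S_\alpha y_i)$ in the first case and $\sum_i T_0(S_\alpha x_i^{**})(y_i)$ in the second. Either way the pointwise convergence of $S_\alpha$ (together with continuity of $T_0 : X^{**} \to Y^*$ in the second case) forces $\langle T_\alpha, u\rangle \to \langle T_0, u\rangle \ge \|u\|_\pi - \varepsilon/3$ along the net, and for $\alpha$ sufficiently large the rescaling $T_\alpha / \|T_\alpha\|$ lies in $B_{F(Y,X^*)}$ and still witnesses $\|u\|_\pi$ up to $\varepsilon$. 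The ``in particular'' statement is then immediate from the previous proposition. I expect the only real obstacle to be the PLR bookkeeping: applying PLR in the correct tower (with $X^*$ playing the role of the base space and $X^{***}$ its bidual) and with the right finite-dimensional side conditions is what ensures that the pairings collapse in the computation above, and routing both MAP cases through the single intermediate $\psi_\alpha : Y \to X^{***}$ is what keeps the argument uniform.
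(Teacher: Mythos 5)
Your proof is correct and follows the same overall architecture as the paper's --- Hahn--Banach functional $T_0$, then MAP to reduce to finite rank, then local reflexivity to land in $F(Y,X^*)$ --- but the two final steps are carried out by genuinely different means. The paper truncates an infinite representation of $u$ and, after producing $T_0\in F(X^{**},Y^*)$, invokes the operator version of local reflexivity of Oja and P\~oldvere (\cite[Theorem~2.5]{OP3}) as a black box to obtain $T_1\in F(Y,X^*)$ with $T_1^*x_n^{**}=T_0x_n^{**}$ on the relevant finite set. You instead reduce to finite sums of elementary tensors by density (legitimate, since the supremum over $B_{F(Y,X^*)}$ is a $1$-Lipschitz seminorm, so agreement with $\|\cdot\|_\pi$ on a dense set suffices), reroute the finite-rank operator through $\psi_\alpha:Y\to X^{***}$, and apply the \emph{classical} Principle of Local Reflexivity to $X^*$ --- with $F_\alpha\subseteq X^{***}$ as the finite-dimensional subspace and $\linspan\{x_1^{**},\dots,x_n^{**}\}\subseteq X^{**}$ as the functionals to be preserved --- to push the range back into $X^*$. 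The pairing computations in both MAP cases check out, including the use of norm continuity of $T_0$ when $S_\alpha x_i^{**}\to x_i^{**}$. What your route buys is self-containedness (no appeal to \cite{OP3}); what the paper's route buys is brevity, since the cited theorem packages exactly the PLR bookkeeping you do by hand. The only cosmetic quibble is the final normalisation: dividing by $1+\varepsilon$ rather than by $\|T_\alpha\|$ is cleaner, but either yields the norming estimate since $\varepsilon$ is arbitrary.
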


\begin{proof}
  Let $\varepsilon > 0$.
  Let $u \in X^{**} \pten Y$ and choose
  a representation $u = \sum_{n=1}^\infty x_n^{**} \otimes y_n$
  such that $\sum_{n=1}^\infty \|x_n^{**}\|\|y_n\| > \|u\| - \varepsilon$.
  Choose $N$ such that
  \begin{equation*}
    \sum_{n > N} \|x_n^{**}\|\|y_n\| < \varepsilon.
  \end{equation*}
  Then for all $T \in L(X^{**},Y^*)$ with $\|T\| \le 1$
  \begin{equation*}
    \left|
      \langle T, u \rangle - \sum_{n=1}^{N} Tx_n^{**}(y_n)
    \right|
    \le
    \sum_{n > N} \|Tx_n^{**}\|\|y_n\| < \varepsilon.
  \end{equation*}

  Choose $T \in L(X^{**},Y^*)$ with $\|T\| \le 1$
  such that $\langle T, u \rangle = \|u\|$.

  Assume first that $Y$ has the MAP and assume, with no loss of generality, that $\Vert Tx_n^{**}\Vert\leq 1$ holds for every $n\in\{1,\ldots, N\}$.
  Then there exists a net $(S_\alpha) \subseteq F(Y,Y)$
  such that $\|S_\alpha\| \le 1$ and $S_\alpha y \to y$ for all $y \in Y$.
  Choose $\alpha_0$ large enough so that
  $\|S_{\alpha_0} y_n - y_n\| < \varepsilon/N$ for
  $n \in \{1,\ldots,N\}$.
  Define $T_0 \in F(X^{**},Y^*)$
  by $T_0 := S_{\alpha_0}^* T$.
  Then
  \begin{equation*}
    \left|
      \sum_{n=1}^{N} Tx_n^{**}(y_n) -      
      \sum_{n=1}^{N} T_0 x_n^{**}(y_n)
    \right|
    \le
    \sum_{n=1}^N \|Tx_n^{**}\|\|S_{\alpha_0}y_n - y_n\|
    < \varepsilon.
  \end{equation*}
  
  Similarly, if we assume that $X^{**}$ has the MAP,
  there exists a net $(S_\alpha) \subseteq F(X^{**},X^{**})$
  such that $\|S_\alpha\| \le 1$ and $S_\alpha x^{**} \to x^{**}$ for all $x^{**} \in X^{**}$. Again assume with no loss of generality that $\Vert T^*y_n\Vert\leq 1$ holds for every $n\in\{1,\ldots, N\}$ and choose $\alpha_0$ large enough so that
  $\|S_{\alpha_0} x_n^{**} - x_n^{**}\| < \varepsilon/N$
  for $n \in \{1,\ldots,N\}$.
  Define $T_0 \in F(X^{**},Y^*)$
  by $T_0 :=  T S_{\alpha_0}$.
  Then
  \begin{equation*}
    \left|
      \sum_{n=1}^{N} Tx_n^{**}(y_n) -      
      \sum_{n=1}^{N} T_0 x_n^{**}(y_n)
    \right|
    \le
    \sum_{n=1}^N \|S_{\alpha_0} x_n^{**} - x_n^{**}\|\|T^{*}y_n\|
    < \varepsilon.
  \end{equation*}
  
  So far, in both cases we have found $T_0 \in F(X^{**},Y^*)$
  such that
  \begin{equation*}
    |\langle T, u\rangle - \langle T_0, u \rangle|
    < 3\varepsilon.
  \end{equation*}
  Next we use \cite[Theorem~2.5]{OP3} to find
  $T_1 \in F(Y,X^*) = F(X,Y^*)$ such that
  $\|T_1\| \le 1+\varepsilon$ and $T_1^* x_n^{**} = T_0 x_n^{**}$
  for $n \in \{1,\ldots,N\}$.
  This implies that
  \begin{align*}
    |\langle T, u\rangle - \langle T_1, u \rangle|
    &\le
    |\langle T, u\rangle - \langle T_0, u \rangle|
    +
    |\langle T_0, u\rangle - \langle T_1, u \rangle| \\
    &< 3\varepsilon + 2\varepsilon = 5\varepsilon.
  \end{align*}
  Hence we have $\langle T_1, u \rangle > \|u\| - 5\varepsilon$.
  Since $\varepsilon > 0$ was arbitrary we get that
  $F(X,Y^*)$ is norming for $X^{**} \pten Y$.
\end{proof}

Related to almost isometric ideals is the notion
of finite representability.
In Section~\ref{sectioninje} we shall need a characterisation of when
a separable Banach space is finitely representable in $\ell_1$.
The following lemmata are probably well-known,
but we include their proofs for easy reference.

\begin{lemma}\label{lem:sepfinreprLp-isometric}
  Let $\nu$ be a $\sigma$-additive measure.
  If a separable Banach space $X$ is finitely representable
  in $L_p(\nu)$, $1 \le p < \infty$,
  then it is isometric to a subspace of $L_p[0,1]$.
\end{lemma}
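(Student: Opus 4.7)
The plan is first to (i) reduce the hypothesis to finite representability of $X$ in $L_p[0,1]$ itself, and then (ii) pass from this to an isometric embedding by means of an ultraproduct construction.

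For (i), I would show that every finite-dimensional subspace $F \subseteq L_p(\nu)$ embeds isometrically into $L_p[0,1]$. Fix a basis $f_1,\ldots,f_n$ of $F$; each $f_i$ is $p$-integrable, hence has $\sigma$-finite support, so the $\sigma$-algebra $\Sigma_0$ generated by $\{f_1,\ldots,f_n\}$ is countably generated on a $\sigma$-finite set and $L_p(\Sigma_0,\nu|_{\Sigma_0})$ is a separable $L_p$-space containing $F$. By the classical universality of $L_p[0,1]$ among separable $L_p$-spaces for $1\le p<\infty$, this separable $L_p$-space, and hence $F$, embeds isometrically into $L_p[0,1]$. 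Combined with the hypothesis, this shows $X$ is finitely representable in $L_p[0,1]$.

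For (ii), let $\mathcal{I}$ be the directed set of pairs $\alpha=(E,\varepsilon)$, where $E$ is a finite-dimensional subspace of $X$ and $\varepsilon>0$, ordered by $(E_1,\varepsilon_1)\le(E_2,\varepsilon_2)$ iff $E_1\subseteq E_2$ and $\varepsilon_2\le\varepsilon_1$, and fix an ultrafilter $\mathcal{U}$ on $\mathcal{I}$ refining the order filter. For each $\alpha=(E,\varepsilon)$, use (i) to pick a $(1+\varepsilon)$-isomorphism $T_\alpha: E\to L_p[0,1]$ and extend it by zero outside $E$. The map $J: X\to (L_p[0,1])_{\mathcal{U}}$ sending $x$ to the class of $(T_\alpha(x))_\alpha$ is then a linear isometry: for each fixed $x\in X$ one has $x\in E$ eventually along the order filter, and then $\|T_\alpha(x)\|$ is pinched between $(1-\varepsilon)\|x\|$ and $(1+\varepsilon)\|x\|$, forcing $\|J(x)\|=\lim_{\mathcal{U}}\|T_\alpha(x)\|=\|x\|$; linearity follows from the linearity of $T_\alpha$ on $E$ as soon as $E$ contains the vectors involved.

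By the classical theorem of Dacunha-Castelle and Krivine, the ultrapower $(L_p[0,1])_{\mathcal{U}}$ is itself isometric to some $L_p(\tilde\nu)$, so the separable image $J(X)$ sits inside a separable sub-$L_p$-space of $L_p(\tilde\nu)$, which by (i) embeds isometrically into $L_p[0,1]$, completing the proof. The main obstacle is really step (i): one must invoke the nontrivial universality of $L_p[0,1]$ among separable $L_p$-spaces, which is trivial for $p=2$ but genuinely delicate for $p\neq 2$ (traditionally established via Maharam's theorem or an explicit construction, cf.\ Lindenstrauss and Tzafriri). The ultrapower step (ii) is then routine once one has this universality together with the Dacunha-Castelle and Krivine identification of ultrapowers of $L_p$-spaces as $L_p$-spaces.
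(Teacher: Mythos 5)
Your proof is correct and follows essentially the same route as the paper's: realise $X$ inside an ultrapower via finite representability, identify that ultrapower as an $L_p(\mu)$-space (Dacunha-Castelle--Krivine, cited in the paper via Heinrich), and then use that separable subspaces of $L_p(\mu)$ embed isometrically into $L_p[0,1]$. Your preliminary step (i) and the hand-built ultrafilter on pairs $(E,\varepsilon)$ merely unpack the standard ultrapower characterisation of finite representability that the paper cites directly, and your closing appeal to ``step (i)'' for the separable image is the same countably-generated-sub-$\sigma$-algebra argument the paper outsources to Wojtaszczyk.
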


\begin{proof}
  By \cite[Proposition~11.1.12]{alka} $X$
  is isometric to a subspace of an ultrapower
  $Y = (L_p(\nu))_{\mathcal{U}}$ of $L_p(\nu)$
  for a nonprincipal ultrafilter $\mathcal{U}$.
  But $Y$ is isometric to $L_p(\mu)$ for some measure
  $\mu$ \cite[Theorem~3.3]{Hei1}.
  Since any separable subspace of $L_p(\mu)$ is
  isometric to a subspace of some separable $L_p(\mu_1)$
  \cite[Proposition~III.A.2]{Woj91},
  which is isometric to a subspace
  of $L_p[0,1]$ \cite[pp.~14--15]{JohLin},
  the lemma follows.
\end{proof}

\begin{lemma}\label{lemma:char_fin_repr_L1}
  Let $X$ be a separable Banach space.
  The following are equivalent:
  \begin{enumerate}
  \item\label{item:L1-1}
    $X$ is finitely representable in $L_1$.
  \item\label{item:L1-0}
    $X$ is finitely representable in $\ell_1$.
  \item\label{item:L1-2}
    $X$ is isometric to a subspace of $L_1$.
  \item\label{item:L1-3}
    For all $\varepsilon > 0$ there exists
    a $(1+\varepsilon)$ isometry from $X$
    into $L_1$.
  \end{enumerate}
  These statements are implied by
  \begin{enumerate}[resume]
  \item\label{item:L1-4}
    For all $\varepsilon > 0$ there exists
    a $(1+\varepsilon)$ isometry from $X$
    into $\ell_1$.
  \end{enumerate}
  If $X$ is finite-dimensional, then all the statements
  are equivalent.
  % Example that this is not equivalent in general: X = \ell_2.
\end{lemma}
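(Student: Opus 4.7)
The plan is to prove (i)--(iv) equivalent by establishing the cycle (iii)$\Rightarrow$(iv)$\Rightarrow$(i)$\Rightarrow$(iii) together with (i)$\Leftrightarrow$(ii), then to verify (v)$\Rightarrow$(iv), and finally to close the missing implication in the finite-dimensional case.

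The implication (iii)$\Rightarrow$(iv) is immediate, since an isometric embedding is trivially a $(1+\varepsilon)$ isometry for every $\varepsilon>0$, while (iv)$\Rightarrow$(i) follows at once from the definition of finite representability applied to each finite-dimensional subspace $E\subseteq X$. The crucial step (i)$\Rightarrow$(iii) is precisely the content of Lemma~\ref{lem:sepfinreprLp-isometric} applied with $p=1$ and $\nu$ the Lebesgue measure on $[0,1]$. For (i)$\Leftrightarrow$(ii), I would use that $\ell_1$ embeds isometrically into $L_1$ (via characteristic functions of a sequence of disjoint measurable sets of measure one), from which (ii)$\Rightarrow$(i) is immediate; conversely, simple functions supported on finitely many pairwise disjoint sets span isometric copies of $\ell_1^n$ inside $L_1$, and since simple functions are dense in $L_1$ one concludes that $L_1$ is itself finitely representable in $\ell_1$, so that (i)$\Rightarrow$(ii) follows by transitivity of finite representability. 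The implication (v)$\Rightarrow$(iv) is then trivial: compose the $(1+\varepsilon)$ isometry $X\to\ell_1$ with the isometric inclusion $\ell_1\hookrightarrow L_1$.

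It remains to treat the finite-dimensional case, where the only new content is (iii)$\Rightarrow$(v). Given a finite-dimensional $X$ isometrically embedded in $L_1[0,1]$ and $\varepsilon>0$, I would fix an $\varepsilon$-net in $S_X$, invoke density of simple functions to approximate each of these (finitely many) images simultaneously by step functions taken with respect to a single common partition of $[0,1]$ into disjoint measurable pieces, and then check that the induced linear map into the span of the corresponding characteristic functions---which is isometric to $\ell_1^n$ for some $n$---is a $(1+\varepsilon')$ isometry on all of $X$, with $\varepsilon'\to 0$ as $\varepsilon\to 0$. The main obstacle, modest in this setting, is choosing the partition so that the simple-function approximation is uniform across the chosen net; this is a routine simultaneous-approximation argument that crucially uses finite-dimensionality to upgrade pointwise approximation on a net into norm control of a linear operator.
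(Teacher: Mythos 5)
Your proposal is correct and follows essentially the same route as the paper: the substantive step is Lemma~\ref{lem:sepfinreprLp-isometric} (you apply it directly to $L_1$ with Lebesgue measure to get (\ref{item:L1-1})$\Rightarrow$(\ref{item:L1-2}), whereas the paper applies it to $\ell_1$ to get (\ref{item:L1-0})$\Rightarrow$(\ref{item:L1-2})), combined with the mutual finite representability of $L_1$ and $\ell_1$, which the paper simply cites and you sketch by the standard partition/conditional-expectation argument. The remaining implications are closed by the same trivial observations, so the only differences are the choice of which easy arrows to draw and the level of detail in the finite-dimensional case.
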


\begin{proof}
  (\ref{item:L1-1}) $\Rightarrow$ (\ref{item:L1-0})
  since $L_1$ is finitely representable in $\ell_1$
  \cite[Proposition~11.1.7]{alka}.
  (\ref{item:L1-0}) $\Rightarrow$ (\ref{item:L1-2}) by
  Lemma~\ref{lem:sepfinreprLp-isometric}.
  (\ref{item:L1-2}) $\Rightarrow$ (\ref{item:L1-3}),
  (\ref{item:L1-3}) $\Rightarrow$ (\ref{item:L1-1})
  and
  (\ref{item:L1-4}) $\Rightarrow$ (\ref{item:L1-0})
  are all trivial.

  If $X$ is finite-dimensional, then
  (\ref{item:L1-2}) $\Rightarrow$ (\ref{item:L1-4})
  by finite representability of $L_1$ in $\ell_1$.
\end{proof}

\section{Octahedrality in injective tensor products}\label{sectioninje}
\bigskip

The authors of \cite{hlp} introduced a new notion of octahedrality.
The norm of a Banach space $X$ is \emph{alternatively octahedral} if, for every
$x_1,\ldots,x_n \in S_X$ and $\varepsilon > 0$, there is
a $y \in S_X$ such that
\begin{equation*}
  \max\{ \|x_i+y\|, \|x_i - y\| \} > 2 - \varepsilon
  \qquad \mbox{for all} \; i \in \{1,\ldots,n\}.
\end{equation*}
This norm condition
implies that there exist $x_1^*,\ldots,x_n^* \in S_{X^*}$
such that
\begin{equation*}
  |x_i^*(x_i)| > 1-\varepsilon
  \quad \mbox{and} \quad
  |x_i^*(y)| > 1-\varepsilon\quad \mbox{for every } i \in \{1,\ldots,n \}.
\end{equation*}

It is known that the norm of $X$ is octahedral if, and only
if, for every $x_1,\ldots, x_n\in S_X$ and
$\varepsilon>0$ there exists $y\in S_X$ such that
$\Vert x_i+y\Vert>2-\varepsilon$ for all $i\in\{1,\ldots, n\}$
(see \cite[Proposition~2.1]{hlp0}).
Consequently, octahedrality implies alternative octahedrality.
However, the converse does not hold.

\begin{example}
  It is not difficult to see that $c_0$ and $\ell_\infty$ do not have
  an octahedral norm. However, the norms of these spaces are
  alternatively octahedral.
  To see this consider elements $x_1,\ldots, x_n$ of norm one
  and let $i_1,\ldots,i_m$ be distinct indices where these elements
  (almost) attain their norm. The norm one element
  $y = e_{i_1} + e_{i_2} + \cdots + e_{i_m}$ does the job.
\end{example}

In \cite{llr} it is shown that if $X$ and $Y$ are Banach
spaces whose norms are octahedral then the norm of $X \iten Y$
is also octahedral. The following proposition is similar to
\cite[Theorem~2.1]{hlp} and improves \cite[Theorem~2.2]{llr}.

\begin{proposition}\label{prop:operator-OHandAltOH}
  Let $X$ and $Y$ be Banach spaces and
  $H$ a subspace of $L(X^*,Y)$ containing $X \otimes Y$
  such that every $T \in H$ is weak$^*$-weakly continuous.
  If the norm of $X$ is alternatively octahedral and the norm of $Y$
  is octahedral, then the norm of $H$ is octahedral.
\end{proposition}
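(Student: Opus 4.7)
The plan is as follows. Fix $T_1,\ldots,T_n\in S_H$ and $\varepsilon>0$; I will construct a norm-one operator $T\in X\otimes Y\subseteq H$ with $\|T_i+T\|>2-C\varepsilon$ for every $i$, where $C$ is an absolute constant. Since $X\otimes Y\subseteq L(X^*,Y)$ carries the injective tensor norm, $\|x\otimes y\|_{L(X^*,Y)}=\|x\|\,\|y\|$, so the operator $T$ will be a simple tensor $x\otimes y$ with $x\in S_X$ and $y\in S_Y$.

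First, for each $i$ choose $y_i^*\in S_{Y^*}$ with $\|T_i^*y_i^*\|>1-\varepsilon$. The weak$^*$-weak continuity of $T_i$ guarantees that $T_i^*y_i^*$ lies in $X$ (not merely $X^{**}$), so one may set $\widetilde{x}_i:=T_i^*y_i^*/\|T_i^*y_i^*\|\in S_X$. Alternative octahedrality of $X$ applied to $\widetilde{x}_1,\ldots,\widetilde{x}_n$ then produces $x\in S_X$ together with functionals $x_i^*\in S_{X^*}$ and signs $\sigma_i\in\{-1,+1\}$ such that $x_i^*(\widetilde{x}_i)>1-\varepsilon$ and $\sigma_i x_i^*(x)>1-\varepsilon$ (the sign flexibility is the absolute-value statement recorded right after the definition of alternative octahedrality in the excerpt, after possibly flipping the sign of $x_i^*$).

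Next, set $y_i:=T_i x_i^*\in Y$. The chain $y_i^*(y_i)=(T_i^*y_i^*)(x_i^*)=\|T_i^*y_i^*\|\cdot x_i^*(\widetilde{x}_i)>(1-\varepsilon)^2$ shows $\|y_i\|>(1-\varepsilon)^2$, so the sign-adjusted normalisations $\sigma_i y_i/\|y_i\|$ lie in $S_Y$. Octahedrality of $Y$ applied to this finite family delivers $y\in S_Y$ with $\|\sigma_i y_i/\|y_i\|+y\|>2-\varepsilon$ for every $i$. Taking $T:=x\otimes y\in X\otimes Y\subseteq H$ gives $\|T\|=1$ and $\|(T_i+T)(x_i^*)\|=\|y_i+x_i^*(x)\,y\|$. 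Since $\sigma_i x_i^*(x)>1-\varepsilon$ and $|x_i^*(x)|\leq 1$ force $|x_i^*(x)-\sigma_i|<\varepsilon$, combining this with the lower bound on $\|\sigma_i y_i+\|y_i\|y\|$ by a routine triangle-inequality estimate yields $\|T_i+T\|>2-C\varepsilon$, as desired.

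The main subtlety is the mismatch between alternative octahedrality (which controls only $|x_i^*(x)|$, so the signs $\sigma_i$ may mix across $i$) and ordinary octahedrality of $Y$ (which produces one $y$ working with a $+$ sign against all given vectors). The resolution is to absorb the sign vector $(\sigma_i)$ into the input of octahedrality for $Y$, feeding the family $\sigma_i y_i/\|y_i\|$ instead of $y_i/\|y_i\|$. A second delicate point is that the functionals $x_i^*$ produced by alternative octahedrality in $X$ must still almost attain the norm of $T_i$; this is arranged up front by defining $\widetilde{x}_i$ in the direction where $T_i^*$ nearly attains its norm, so that any unit functional $x_i^*$ with $x_i^*(\widetilde{x}_i)\approx 1$ automatically satisfies $\|T_i x_i^*\|\approx 1$.
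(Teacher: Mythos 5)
Your proof is correct, and its skeleton matches the paper's: choose $y_i^*\in S_{Y^*}$ almost norming $T_i$, use weak$^*$-weak continuity to see $T_i^*y_i^*\in X$, feed these into alternative octahedrality of $X$ to get $x$ and the $x_i^*$, then use octahedrality of $Y$ on the vectors $T_i x_i^*$ and take the simple tensor $x\otimes y$. Where you genuinely diverge is in the last step. The paper invokes a simultaneous-norming refinement of octahedrality (\cite[Theorem~3.21]{lan}) to produce, along with $z\in S_Y$, functionals $z_i^*$ with $\|z_i^*\|\le 1+\varepsilon$, $z_i^*(T_ix_i^*)=y_i^*(T_ix_i^*)$ and the prescribed values $z_i^*(z)=\sgn x_i^*(w)$; the signs are then absorbed on the functional side, and the estimate $\|T_i+S\|\ge (1+\varepsilon)^{-1}z_i^*(T_ix_i^*+Sx_i^*)$ falls out in one line. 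You instead use only the elementary finite-set characterisation of octahedrality, absorb the signs $\sigma_i$ into the input vectors $\sigma_i y_i/\|y_i\|$, and recover the bound on $\|y_i+x_i^*(x)y\|$ by perturbing the coefficient ($|x_i^*(x)-\sigma_i\|y_i\||<2\varepsilon$ since both $\sigma_i x_i^*(x)$ and $\|y_i\|$ lie in $((1-\varepsilon)^2,1]$). Your route is more self-contained --- it needs no external lemma beyond \cite[Proposition~2.1]{hlp0}, which the paper already quotes --- at the cost of a slightly worse constant $C$; the paper's route via prescribed functional values is the one that generalises to situations where one wants dual control (as in the rest of that literature). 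Both are complete proofs.
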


\begin{proof}
  Let $T_1,\ldots,T_n \in S_H$ and $\varepsilon > 0$.
  For each $i \in \{1,\ldots,n\}$ find $y_i^* \in S_{Y^*}$ such that
  $\|T_i^* y_i^*\| > 1 - \varepsilon$. Note that $T_i^*y_i^* \in X$
  for all $i\in\{1,\ldots, n\}$ since $H$ consists of
  weak$^*$-weakly continuous operators.
  Since the norm of $X$ is alternatively octahedral there exist
  $x_1^*,\ldots,x_n^* \in S_{X^*}$
  and $w \in S_X$ such that $|x_i^*(w)| > 1 - \varepsilon$ and
  \begin{equation*}
    |x_i^*(T_i^* y_i^*)| > \|T_i^* y_i^*\| (1-\varepsilon)
    > (1-\varepsilon)^2
  \end{equation*}
  holds for every $i\in\{1,\ldots, n\}$. We may assume that
  $x_i^*(T_i^* y_i^*) > 0$ for all $i \in \{1,\ldots,n\}$.
  Define $\gamma_i := \sgn x_i^*(w)$.

  Let $F = \linspan\{T_i x_i^* : i \in \{1,\ldots,n\} \} \subset Y$.
  Use octahedrality and \cite[Theorem~3.21]{lan} to find
  $z \in S_Y$ and $z_i^* \in Y^*$, $i \in \{1,\ldots, n\}$,
  such that $z_i^*(T_i x_i^*) = y_i^*(T_i x_i^*)$,
  $z_i^*(z) = \gamma_i$ and $\|z_i^*\| \le 1 + \varepsilon$ holds for every $i$.

  Define $S := w \otimes z \in X \otimes Y$. We have $S \in S_H$
  and, for each $i\in\{1,\ldots, n\}$, it follows that
  \begin{align*}
    \|T_i + S\|
    &\ge \frac{1}{1+\varepsilon}
      z_i^*(T_i x_i^* + Sx_i^*)
    =  \frac{1}{1+\varepsilon}
      (y_i^*(T_i x_i^*) + x_i^*(w)z_i^*(z)) \\
    &=  \frac{1}{1+\varepsilon}
      (y_i^*(T_i x_i^*) + |x_i^*(w)|)
      > \frac{2-3\varepsilon+\varepsilon^2}{1+\varepsilon}
      > 2 - 5 \varepsilon.
  \end{align*}
  Hence we conclude that the norm of $H$ is octahedral.
\end{proof}

Throughout the rest of this section we study whether the norm of
$X\iten Y$ is octahedral when we assume that the norm of only one of
the factors is octahedral. For this, we shall begin by giving some
positive results for the Banach spaces $\ell_1$ and $L_1$, which have
an octahedral norm.

\begin{theorem}\label{lema2dim}
  Let $X$ be a Banach space. Then:
  \begin{enumerate}
  \item \label{lema2dim1} If $X$ is $(1+\varepsilon)$ isometric to a
    subspace of $\ell_1$, then the norm of $L(X,\ell_1)$ is octahedral.
  \item \label{lema2dim2}  If $X$ is $(1+\varepsilon)$ isometric to a
    subspace of $L_1$, then the norm of $L(X,L_1)$ is octahedral.
  \end{enumerate}
\end{theorem}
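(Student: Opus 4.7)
The overall strategy for both items is the same direct construction. By the reformulation of octahedrality recalled from \cite[Proposition~2.1]{hlp0} (now applied with $L(X,\ell_1)$ or $L(X,L_1)$ in place of $X$), it suffices to show that given $T_1,\ldots,T_n\in S_{L(X,\ell_1)}$ and $\varepsilon>0$ one can produce a single operator $S$ of norm one with $\|T_i+S\|>2-\varepsilon$ for every $i$. I plan to build $S$ as $S:=\sigma\circ j$, where $j\colon X\to\ell_1$ (respectively $L_1$) is the almost isometric embedding provided by the hypothesis and $\sigma$ is a cleverly chosen norm-one operator on $\ell_1$ (resp. $L_1$) that pushes the image of $j$ into a region essentially disjoint, in the $\ell_1$-sense, from the supports of the $T_ix_i$, where $x_i\in S_X$ almost norm the $T_i$.

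For item (\ref{lema2dim1}) the concrete construction is as follows. Fix $\delta\ll\varepsilon$ and pick $x_i\in S_X$ with $\|T_ix_i\|_1>1-\delta$. Choose $N\in\mathbb{N}$ large enough that $\|T_ix_i-P_NT_ix_i\|_1<\delta$ for every $i$, where $P_N$ is the coordinate projection onto $\{1,\ldots,N\}$. Take $j\colon X\to\ell_1$ satisfying $(1-\delta)\|x\|\le\|jx\|_1\le\|x\|$ (obtained by normalising a $(1+\delta)$-isometry), and let $\sigma\colon\ell_1\to\ell_1$ be the canonical isometric right shift $\sigma e_k:=e_{k+N}$. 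Then $\sigma(jx_i)$ is supported in coordinates strictly greater than $N$, so splitting $\|T_ix_i+Sx_i\|_1$ according to coordinates $\le N$ and $>N$ and applying the triangle inequality in each block yields $\|T_ix_i+Sx_i\|_1\ge 2-O(\delta)$. Since $\|S\|\in[1-\delta,1]$, the final normalisation of $S$ inflates the estimate by at most $O(\delta)$.

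For item (\ref{lema2dim2}) the only change is to replace the shift by a ``rescaling onto a small set.'' Using absolute continuity of the Lebesgue integral, I would pick a measurable set $E\subset[0,1]$ of small but positive measure with $\int_E |T_ix_i|<\delta$ for every $i$, and define $\tau\colon L_1\to L_1$ to be the isometry induced by any affine bijection $[0,1]\to E$, which sends $f\in L_1$ to a function supported on $E$ with $\|\tau f\|_1=\|f\|_1$. Setting $S:=\tau\circ j$, the functions $Sx_i$ live on $E$ while $T_ix_i$ has negligible $L_1$-mass on $E$, so the same triangle inequality split as above gives $\|T_i+S\|>2-O(\delta)$.

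I do not expect any serious conceptual obstacle: the argument reduces to transporting a finite collection of almost norming vectors into an almost disjointly supported region of $\ell_1$ or $L_1$ by composing the hypothesised embedding with an isometric shift or rescaling, and then exploiting the $\ell_1$-additivity of norms of disjointly supported vectors. The only delicate point is bookkeeping: the parameters $\delta$, then $N$ (respectively $E$), then the embedding $j$ and finally the normalisation of $S$ must be chosen in this order so that all approximation errors compound to something comfortably smaller than $\varepsilon$.
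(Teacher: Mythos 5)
Your proposal is correct and follows essentially the same route as the paper's proof: pick almost-norming vectors $x_i$, localise the $T_ix_i$ (to the first $N$ coordinates, resp.\ off a small interval), and compose the hypothesised almost-isometric embedding with a shift (resp.\ an affine change-of-variables isometry onto that small interval) to obtain a norm-one operator whose images are disjointly supported from the $T_ix_i$, then use $\ell_1$-additivity. The only cosmetic point is that in part (\ref{lema2dim2}) you should take your small set $E$ to be an interval (as the paper does with its closed interval $I$) so that the affine rescaling isometry actually exists.
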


\begin{proof}
  (\ref{lema2dim1}).
  Let $\varepsilon>0$ and
  $\psi: X \to \ell_1$ be a $(1+\varepsilon)$ isometry.
  Let $T_1,\ldots, T_n\in S_{L(X,\ell_1)}$
  and, for every $i\in\{1,\ldots, n\}$, pick $x_i\in S_{X}$
  such that $\Vert T_i(x_i)\Vert>1-\varepsilon$.

  Let $P_k$ be the projection on $\ell_1$ onto the first
  $k$ coordinates. Choose $k \in \mathbb{N}$
  so that $\|P_k(T_i(x_i)) - T_i(x_i)\| < \varepsilon$
  and $\|P_k(\psi(x_i)) - \psi(x_i)\| < \varepsilon$
  for every $i \in \{1,\ldots, n\}$.

  Let $\varphi_k: \ell_1 \to \ell_1$ be
  the shift operator defined by
  \begin{equation*}
    \varphi_k(x)(n):=
    \begin{cases}
      0 & \mbox{if }n\leq k,\\
      x(n-k) & \mbox{if }n>k.
    \end{cases}
  \end{equation*}
  Define $S:=\varphi_k \circ P_k \circ \psi$.
  Now, as $P_k(T_i(x_i))$ and $S(x_i)$ have disjoint support, we have that
  \begin{align*}
    \Vert T_i + S\Vert
    &\ge
      \Vert P_k T_i(x_i)\Vert - \varepsilon + \Vert P_k(\psi((x_i))) \Vert \\
    &\ge \Vert T_i(x_i)\Vert + \|\psi(x_i)\| - 3\varepsilon
      >
      2-5\varepsilon,
  \end{align*}
  so we are done.

  (\ref{lema2dim2}).
  Define $A:=[0,1]$.
  Let $T_1,\ldots, T_n\in S_{L(X, L_1)}$ and $\varepsilon>0$.
  By assumption there exists $x_i\in S_{X}$ such that
  $\Vert T_i(x_i) \Vert = \int_A \vert T_i(x_i) \vert
  > 1 - \frac{\varepsilon}{2}$ for all $i\in\{1,\ldots,n\}$.
  Pick  a closed interval $I\subseteq A$ such that
  $\int_I \vert T_i(x_i)\vert<\frac{\varepsilon}{2}$ holds for each
  $i\in\{1,\ldots, n\}$.

  By assumption and Lemma~\ref{lemma:char_fin_repr_L1} there exists
  a linear isometry $T:X \to L_1$.
  Let $\phi: I\to A$ be an increasing and affine bijection.
  Define $S_I:L_1\to L_1$ by the equation
  \begin{equation*}
    S_I(f)=(f\circ \phi)\phi' \chi_I \quad \mbox{ for all }f\in L_1,
  \end{equation*}
  where $\chi_I$ denotes the characteristic function on the interval $I$.
  Note that $S_I$ is a linear isometry because of the change
  of variable theorem.
  Indeed
  \begin{equation*}
    \Vert S_I(f)\Vert = \int_I \vert (f\circ \phi)\phi'\vert
      = \int_{\phi(I)}\vert f\vert
      = \int_A\vert f\vert
      = \Vert f\Vert\ \mbox{for all } f\in L_1.
  \end{equation*}

  Define $G:=S_I\circ T$, which is a linear isometry such that
  $\supp(G(f))\subseteq I$ for all $f\in L_1$. Given $i\in\{1,\ldots, n\}$, we have
  \begin{equation*}
    \Vert T_i+G\Vert
      \geq \Vert T_i(x_i)+G(x_i)\Vert
      = \int_{A\setminus I} \vert T_i(x_i)\vert + \int_I \vert T_i(x_i) + G(x_i)\vert.
  \end{equation*}
  Now
  \begin{equation*}
    \int_{A\setminus I} \vert T_i(x_i)\vert
      = \Vert T_i(x_i)\Vert - \int_I \vert T_i(x_i)\vert
      > 1-\varepsilon.
  \end{equation*}
  Moreover
  \begin{equation*}
    \int_I \vert T_i(x_i) + G(x_i)\vert
      \geq \int_I \vert G(x_i)\vert - \vert T_i(x_i)\vert
      > \int_I \vert G(x_i)\vert - \frac{\varepsilon}{2}.
  \end{equation*}
  Finally note that, as $\supp(G(x_i))\subseteq I$, we have
  $\int_I \vert G(x_i)\vert = \Vert G(x_i)\Vert =\Vert x_i\Vert = 1$.
  Consequently
  \begin{equation*}
    \Vert T_i+G\Vert>2-2\varepsilon.
  \end{equation*}
  As $\varepsilon$ was arbitrary we conclude that the norm of
  $L(X,L_1)$ is octahedral, as desired.
\end{proof}

From here we can conclude the following result.

\begin{corollary}\label{2dimell1OH}
  If $X$ is a 2-dimensional Banach space,
  then the norms of both $\ell_1\iten X=L(c_0,X)$ and $L_1 \iten X$ are octahedral.
\end{corollary}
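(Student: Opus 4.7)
The plan is to reduce the statement to Theorem~\ref{lema2dim}, applied with $X^{*}$ in the role of the embedded space. Two ingredients are required: first, identifications of $\ell_1\iten X$ and $L_1\iten X$ as operator spaces with range $\ell_1$, respectively $L_1$; second, the fact that the $2$-dimensional space $X^{*}$ embeds (almost) isometrically into $\ell_1$ and into $L_1$.

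For the tensor/operator identification, since $X$ is finite-dimensional the algebraic tensor product $Y\otimes X$ already coincides with $L(X^{*},Y)$ for any Banach space $Y$, and the injective tensor norm on $Y\otimes X$ equals the operator norm on $L(X^{*},Y)$. This yields the isometric identifications
\begin{equation*}
  \ell_1 \iten X \;=\; L(X^{*},\ell_1)
  \qquad\text{and}\qquad
  L_1 \iten X \;=\; L(X^{*},L_1).
\end{equation*}
The first identity is compatible with the formula $\ell_1\iten X = L(c_0,X)$ stated in the corollary via the adjoint map $T\mapsto T^{*}$, which is an isometric isomorphism $L(c_0,X)\to L(X^{*},\ell_1)$ since $X$ is reflexive.

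For the embedding, I would invoke the classical fact that every $2$-dimensional normed space is isometric to a subspace of $L_1$; equivalently, every centrally symmetric convex body in $\mathbb{R}^2$ is a zonoid, which provides an integral representation of the norm as an average of absolute values of linear functionals. Applied to the $2$-dimensional space $X^{*}$, this yields an isometric embedding $X^{*}\hookrightarrow L_1$. Since $X^{*}$ is finite-dimensional, Lemma~\ref{lemma:char_fin_repr_L1} (whose conditions are all equivalent in the finite-dimensional case) then gives that $X^{*}$ is $(1+\varepsilon)$-isometric to a subspace of $\ell_1$ for every $\varepsilon>0$.

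Combining these inputs, Theorem~\ref{lema2dim}(\ref{lema2dim1}) applied to $X^{*}$ shows that $L(X^{*},\ell_1)=\ell_1\iten X$ has octahedral norm, while Theorem~\ref{lema2dim}(\ref{lema2dim2}) applied to $X^{*}$ shows that $L(X^{*},L_1)=L_1\iten X$ has octahedral norm. The only non-routine ingredient in the whole plan is the classical $2$-dimensional $L_1$-embedding theorem; the remaining steps are bookkeeping with standard tensor/operator identifications.
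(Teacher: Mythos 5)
Your proposal is correct and follows essentially the same route as the paper: the paper also observes that the $2$-dimensional space $X^*$ embeds isometrically into $L_1$ (citing Dor), feeds this through Lemma~\ref{lemma:char_fin_repr_L1}, and applies Theorem~\ref{lema2dim}. Your additional remarks on the zonoid argument and on the identification $\ell_1\iten X = L(X^*,\ell_1)$ for finite-dimensional $X$ are accurate bookkeeping that the paper leaves implicit.
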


\begin{proof}
  We have that $X^*$ is isometric to a subspace of $L_1$ \cite[Corollary~1.4]{dor}.
  From Lemma~\ref{lemma:char_fin_repr_L1} we see that Theorem~\ref{lema2dim} applies.
\end{proof}

Note that the above corollary improves \cite[Proposition~2.3]{hlp}, where
the authors show that the norm of $L(c_0,\ell_p^2)$ is octahedral for
every $1\leq p\leq \infty$. Dualising we get the following result,
which improves \cite[Proposition~2.10]{llr} for two
dimensional Banach spaces.

\begin{corollary}
  If $X$ is a 2-dimensional Banach space,
  then $c_0\pten X$ has the SD2P.
\end{corollary}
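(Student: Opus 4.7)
The plan is to apply the standard duality between the SD2P and octahedrality of the dual norm, recalled in the introduction: a Banach space $Z$ has the SD2P if and only if the norm of $Z^{*}$ is octahedral. So it suffices to show that $(c_0\pten X)^{*}$ has an octahedral norm.

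First I would identify the dual via the standard trace duality
\[
  (c_0\pten X)^{*} = L(c_0, X^{*}).
\]
Since $X$ is $2$-dimensional, so is $X^{*}$, and hence $L(c_0, X^{*})$ coincides isometrically with $\ell_1\iten X^{*}$, exactly the identification already used in Corollary~\ref{2dimell1OH}.

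Next I would invoke Corollary~\ref{2dimell1OH} applied to the $2$-dimensional space $X^{*}$ (in place of $X$), which yields that the norm of $\ell_1\iten X^{*}=L(c_0,X^{*})$ is octahedral. Combining this with the identifications above shows that the dual norm of $c_0\pten X$ is octahedral, and the duality cited above then delivers the SD2P for $c_0\pten X$.

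The proof is essentially a one-line dualisation of Corollary~\ref{2dimell1OH}, so no real obstacle arises; the only bookkeeping is to confirm that the two identifications are isometric, which is guaranteed by the finite-dimensionality of $X^{*}$.
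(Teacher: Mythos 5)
Your proof is correct and is essentially the paper's own argument: the paper states this corollary as an immediate dualisation of Corollary~\ref{2dimell1OH}, using exactly the identification $(c_0\pten X)^{*}=L(c_0,X^{*})=\ell_1\iten X^{*}$ for the $2$-dimensional space $X^{*}$ together with the SD2P/octahedrality duality. No issues.
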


Next we give more examples of finite-dimensional Banach spaces for
which the norm of its injective tensor product with $\ell_1$ and $L_1$
are octahedral.

\begin{proposition}\label{posingeq3}
  Let $n\geq 3$ be a natural number and $2\leq p\leq \infty$.
  Then the norms of both $\ell_1\iten \ell_p^n$ and $L_1\iten \ell_p^n$ are octahedral.
\end{proposition}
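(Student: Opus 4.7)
My approach is to reduce both statements to Theorem \ref{lema2dim} via the fact that tensoring with a finite-dimensional space turns an injective tensor product into an operator space. First, I would observe that for every Banach space $Z$,
\begin{equation*}
  \ell_1\iten\ell_p^n\;=\;\ell_p^n\iten\ell_1\;\cong\;L(\ell_{p^*}^n,\ell_1),
  \qquad
  L_1\iten\ell_p^n\;=\;\ell_p^n\iten L_1\;\cong\;L(\ell_{p^*}^n,L_1),
\end{equation*}
isometrically, via the standard identification sending $\sum_{i=1}^n e_i\otimes z_i$ to the operator $e_i^{*}\mapsto z_i$. This is valid because $\ell_p^n$ is finite-dimensional, so $\ell_p^n\iten Z=\ell_p^n\otimes Z=F(\ell_{p^*}^n,Z)=L(\ell_{p^*}^n,Z)$ with coinciding norms.

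With these reductions in hand, Theorem \ref{lema2dim} applies provided I verify that $\ell_{p^*}^n$ is $(1+\varepsilon)$ isometric to a subspace of $\ell_1$ (for (i)) and to a subspace of $L_1$ (for (ii)), for every $\varepsilon>0$. The assumption $2\le p\le\infty$ gives $1\le p^{*}\le 2$, so $\ell_{p^*}^n$ embeds isometrically into $L_{p^*}$, and $L_{p^*}$ embeds isometrically into $L_1$ by the classical construction using $p^{*}$-stable random variables for $1<p^{*}<2$, the Gaussian embedding for $p^{*}=2$, and a trivial embedding for $p^{*}=1$. Composing gives an isometric embedding of $\ell_{p^*}^n$ into $L_1$.

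Since $\ell_{p^*}^n$ is finite-dimensional, Lemma \ref{lemma:char_fin_repr_L1} (where all the listed statements are equivalent in the finite-dimensional case) upgrades the isometric embedding into $L_1$ to the existence, for every $\varepsilon>0$, of a $(1+\varepsilon)$ isometry from $\ell_{p^*}^n$ into $\ell_1$. Applying Theorem \ref{lema2dim} with $X=\ell_{p^*}^n$ then yields octahedrality of both $L(\ell_{p^*}^n,\ell_1)$ and $L(\ell_{p^*}^n,L_1)$, which is exactly what was required.

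I do not foresee any real obstacle: the argument is essentially a bookkeeping combination of the tensor–operator identification in the finite-dimensional factor, Theorem \ref{lema2dim}, and Lemma \ref{lemma:char_fin_repr_L1}, with the only external input being the classical isometric embedding $L_{p^*}\hookrightarrow L_1$ for $p^{*}\in[1,2]$. Note that the restriction $n\ge 3$ is not needed for the proof to go through; it is imposed only to separate the statement from Corollary \ref{2dimell1OH}, which already handles $n=2$ for arbitrary $1\le p\le\infty$.
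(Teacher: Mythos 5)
Your proposal is correct and follows essentially the same route as the paper: identify $X\iten\ell_p^n$ with $L(\ell_{p^*}^n,X)$, embed $\ell_{p^*}^n$ isometrically into $L_1$ (the paper cites the isometric embedding of $\ell_{p^*}$ into $L_1$, which is the same stable-variable fact you invoke), and then combine Lemma~\ref{lemma:char_fin_repr_L1} with Theorem~\ref{lema2dim}. Your observation that $n\ge 3$ is not actually needed for the argument is also accurate; the hypothesis only serves to delimit the statement against Corollary~\ref{2dimell1OH} and Theorem~\ref{teocentracontraeje-v2}.
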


\begin{proof}
  We know that $\ell_{p^*}$ is isometric to a subspace of $L_1$
  \cite[Theorem~6.4.19]{alka} which in turn contains $\ell_{p^*}^n$
  isometrically.
  From Lemma~\ref{lemma:char_fin_repr_L1} we see that Theorem~\ref{lema2dim}
  applies and shows that the norm of $Y \iten \ell_p^n =
  L(\ell_{p^*}^n,Y)$ is octahedral for $Y = \ell_1$ and $Y = L_1$.
\end{proof}

In fact, an infinite-dimensional version of the previous result also works.

\begin{proposition}\label{lpohinfidime}
  Let $2\leq p< \infty$. Then:
  \begin{enumerate}
  \item \label{lpohinfidime1} Given a closed subspace $H$ of
    $L(\ell_{p^*},\ell_1)$ containing $\ell_p\otimes \ell_1$, then the
    norm of $H$ is octahedral.
  \item \label{lpohinfidim2} Given a closed subspace $H$ of
    $L(\ell_{p^*},L_1)$ containing $\ell_p\otimes L_1$, then the norm of
    $H$ is octahedral.
  \end{enumerate}
\end{proposition}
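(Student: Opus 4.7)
The plan is to adapt the arguments of Theorem~\ref{lema2dim} by first working on a finite-dimensional coordinate subspace of $\ell_{p^*}$ and then pulling the resulting operator back to all of $\ell_{p^*}$ via the natural coordinate projection. The key ingredient is that, for $2\le p<\infty$, the space $\ell_{p^*}$ is isometric to a subspace of $L_1$ by \cite[Theorem~6.4.19]{alka}, so every finite-dimensional coordinate block $\ell_{p^*}^{N_0}$ is isometric to a subspace of $L_1$ and, by Lemma~\ref{lemma:char_fin_repr_L1}, $(1+\varepsilon)$-isometric to a subspace of $\ell_1$.

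Given $T_1,\ldots,T_n\in S_H$ and $\varepsilon>0$, I would first pick $x_i\in S_{\ell_{p^*}}$ with $\Vert T_i(x_i)\Vert>1-\varepsilon$ and, after a harmless perturbation, assume every $x_i$ is supported in the first $N_0$ coordinates. For part (\ref{lpohinfidime1}), choose a $(1+\varepsilon)$-isometry $\psi:\ell_{p^*}^{N_0}\to\ell_1$, take $k$ large so that $\Vert P_k(T_i(x_i))-T_i(x_i)\Vert<\varepsilon$ and $\Vert P_k(\psi(x_i))-\psi(x_i)\Vert<\varepsilon$ for every $i$, and form $\tilde S:=\varphi_k\circ P_k\circ\psi$ exactly as in the proof of Theorem~\ref{lema2dim}(\ref{lema2dim1}), so that its range lies beyond the first $k$ coordinates of $\ell_1$ and $\Vert\tilde S(x_i)\Vert$ is close to $1$. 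The crucial extension step is then to define $S:=\tilde S\circ\pi_{N_0}$, where $\pi_{N_0}:\ell_{p^*}\to\ell_{p^*}^{N_0}$ is the norm-one coordinate projection. Since $S$ has finite rank it lies in $\ell_p\otimes\ell_1\subseteq H$; moreover $\Vert S\Vert\le 1+\varepsilon$ and $S(x_i)=\tilde S(x_i)$, so the disjoint-support estimate of Theorem~\ref{lema2dim}(\ref{lema2dim1}) still yields $\Vert T_i+S\Vert>2-O(\varepsilon)$, and rescaling to $S/\Vert S\Vert$ provides the desired unit vector of $H$.

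Part (\ref{lpohinfidim2}) runs along the same template, replacing the shift-and-truncate construction by the interval-substitution trick of Theorem~\ref{lema2dim}(\ref{lema2dim2}): pick a closed interval $I\subseteq[0,1]$ with $\int_I\vert T_i(x_i)\vert<\varepsilon/2$ for every $i$, take an isometric embedding $\psi:\ell_{p^*}^{N_0}\to L_1$ supplied by Lemma~\ref{lemma:char_fin_repr_L1}, form $G_0:=S_I\circ\psi$, and set $G:=G_0\circ\pi_{N_0}\in\ell_p\otimes L_1\subseteq H$. The point at which the infinite-dimensional case is genuinely harder than Theorem~\ref{lema2dim} is precisely this extension step: since $\ell_{p^*}$ is not globally $(1+\varepsilon)$-isometric to a subspace of $\ell_1$ for $1<p^*<2$, one cannot simply invoke Theorem~\ref{lema2dim} for the pair $(\ell_{p^*},\ell_1)$. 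What saves the day is that $\pi_{N_0}$ has norm one and the resulting $S$ (respectively $G$) is finite-rank, so it automatically belongs to $\ell_p\otimes\ell_1\subseteq H$ (respectively $\ell_p\otimes L_1\subseteq H$) while preserving the norm bound needed for the disjoint-support computation.
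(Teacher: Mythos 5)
Your proposal is correct and follows essentially the same route as the paper: both reduce to a finite-dimensional coordinate block of $\ell_{p^*}$ (where Lemma~\ref{lemma:char_fin_repr_L1} supplies the $(1+\varepsilon)$-isometric embedding), compose with the norm-one coordinate projection so that the resulting operator is finite rank and hence lies in $\ell_p\otimes\ell_1\subseteq H$ (respectively $\ell_p\otimes L_1\subseteq H$), and then run the disjoint-support estimates of Theorem~\ref{lema2dim}. The only cosmetic difference is that you perturb the $x_i$ to be finitely supported while the paper keeps the original $x_i$ and absorbs the error $\Vert P_m(x_i)-x_i\Vert<\varepsilon$ into the estimate.
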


\begin{proof}
  (\ref{lpohinfidime1}).
  We proceed as in Theorem~\ref{lema2dim}.
  Given $T_1,\ldots, T_n\in S_H$ and $\varepsilon>0$
  we start by choosing, for every
  $i\in\{1,\ldots, n\}$, an element $x_i\in S_{\ell_{p^*}}$ such that
  $\| T_i(x_i) \| > 1 - \varepsilon$.
  Find $m \in \mathbb{N}$ such that $\|P_m(x_i) - x_i\| < \varepsilon$,
  where $P_m$ is the projection onto the first $m$ coordinates.
  Since $\ell_{p^*}$ is finitely representable in $\ell_1$
  there exists a $(1+\varepsilon)$ isometry $T: P_m(\ell_{p^*}) \to \ell_1$.
  The operator $\psi := T \circ P_m$ is then well-defined
  and using this $\psi$ we define $S := \varphi_k \circ P_k \circ \psi$
  as in the proof of Theorem~\ref{lema2dim}.
  Note that $S \in \ell_p \otimes \ell_1 \subseteq H$
  since $P_m$ has finite rank.
  Similar calculations to the ones in
  Theorem~\ref{lema2dim} conclude the proof.

  The proof of (\ref{lpohinfidim2}) is similar, but in this
  case we can use an isometry $T: \ell_{p^*} \to L_1$.
\end{proof}

The above results can be seen as sufficient conditions to get
octahedrality in injective tensor products spaces. Now we turn to
analyse some necessary conditions.

\begin{lemma}\label{lemacontraopera}
  Let $X$ and $Y$ be Banach spaces and assume that $Y^*$ is uniformly convex.
  Assume also that there exists a closed subspace $H$ of $L(Y^*,X)$
  such that $X \otimes Y \subseteq H$
  and that the norm of $H$ is octahedral.
  Then $Y^*$ is finitely representable in $X$.
\end{lemma}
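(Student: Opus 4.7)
The plan is to produce, for each finite-dimensional subspace $F\subseteq Y^{*}$ and each $\varepsilon>0$, an almost isometry $F\to X$, and to manufacture it as the restriction to $F$ of a single operator $S\in S_{H}\subseteq L(Y^{*},X)$. The rank-one operators in $X\otimes Y\subseteq H$ will play the role of test operators whose norming functionals are prescribed in advance, and octahedrality applied to this finite family will force a matching $S$ whose restriction to the chosen directions is almost isometric.

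First I would fix $\eta>0$ and take a finite $\eta$-net $y_{1}^{*},\dots,y_{n}^{*}$ of $S_{F}$. Since $Y^{*}$ is uniformly convex it is reflexive, and James' theorem gives $y_{i}\in S_{Y}$ with $y_{i}^{*}(y_{i})=1$. I would then fix any $x\in S_{X}$ and put $T_{i}:=x\otimes y_{i}\in X\otimes Y\subseteq H$; viewed in $L(Y^{*},X)$, $T_{i}$ acts by $y^{*}\mapsto y^{*}(y_{i})x$ and has norm one. Applying the equivalent reformulation of octahedrality from \cite[Proposition~2.1]{hlp0} to $T_{1},\dots,T_{n}$ yields $S\in S_{H}$ with $\|T_{i}+S\|>2-\varepsilon$ for every $i$.

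Next, for each $i$ I would pick $z_{i}^{*}\in B_{Y^{*}}$ such that $\|(T_{i}+S)(z_{i}^{*})\|>2-\varepsilon$. The triangle inequality, together with $\|T_{i}(z_{i}^{*})\|=|z_{i}^{*}(y_{i})|\le 1$ and $\|S(z_{i}^{*})\|\le 1$, forces both $|z_{i}^{*}(y_{i})|>1-\varepsilon$ and $\|S(z_{i}^{*})\|>1-\varepsilon$. After replacing $z_{i}^{*}$ by $-z_{i}^{*}$ if necessary one may assume $z_{i}^{*}(y_{i})>1-\varepsilon$, so $\|(y_{i}^{*}+z_{i}^{*})/2\|>1-\varepsilon/2$. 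Uniform convexity of $Y^{*}$ then gives $\|y_{i}^{*}-z_{i}^{*}\|\le\delta(\varepsilon)$ with $\delta(\varepsilon)\to 0$, and consequently $\|S(y_{i}^{*})\|>1-\varepsilon-\delta(\varepsilon)$. Since $\{y_{i}^{*}\}$ is an $\eta$-net of $S_{F}$ and $\|S\|=1$, this extends to $\|S(y^{*})\|\ge 1-\varepsilon-\delta(\varepsilon)-\eta$ for every $y^{*}\in S_{F}$; choosing $\varepsilon$ and $\eta$ small enough makes $S|_{F}\colon F\to X$ the desired almost isometry into $X$.

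The main obstacle is the transfer from the norm-almost-attaining functional $z_{i}^{*}$ back to the prescribed $y_{i}^{*}$: octahedrality only guarantees that $T_{i}+S$ is large at some point of $B_{Y^{*}}$, and a priori one cannot dictate which point. Uniform convexity of $Y^{*}$ is precisely what is needed to pin $z_{i}^{*}$ down to $\pm y_{i}^{*}$; without this assumption the rank-one probes $T_{i}$ would carry no useful information about $S$ on the chosen net, and the argument would collapse.
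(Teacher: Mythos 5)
Your proposal is correct and follows essentially the same route as the paper: apply octahedrality in $H$ to the rank-one operators $x\otimes y_i$ built from norming vectors for an $\eta$-net of $S_F$, use the triangle inequality on a near-norming functional of $T_i+S$, and invoke uniform convexity of $Y^*$ to pull that functional back to the prescribed net point, yielding a lower bound for $\|S(\cdot)\|$ on $F$. The only differences are cosmetic (the paper fixes the tolerance as $\delta(\eta)$ up front rather than inverting the modulus a posteriori, and cites \cite[Lemma~11.1.11]{alka} for the net-to-subspace step, which your direct extension over $S_F$ replaces).
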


\begin{proof}
  Recall that the modulus of uniform convexity of $Y^*$ is defined by
  \begin{equation*}
    \delta(\varepsilon)
    = \inf
    \left\{
      1 - \left\| \frac{f+g}{2} \right\|
      : f,g \in B_{Y^*}, \|f-g\| \ge \varepsilon
    \right\}.
  \end{equation*}
  Note that if $f,g \in B_{Y^*}$ satisfy $f(y) > 1-\delta(\varepsilon)$
  and $g(y) > 1-\delta(\varepsilon)$, for some $y \in S_Y$,
  then $\|f-g\| < \varepsilon$.

  Let $\varepsilon > 0$ and choose $\nu > 0$ so small
  that $(1+\nu)(1-3\nu)^{-1} < 1+\varepsilon$.
  Pick $0 < \eta < \nu/2$ such that
  $\delta(\eta) < \nu/2$.

  Let $F \subseteq Y^*$ be a finite-dimensional subspace.
  Pick a $\nu$-net $(f_i)_{i=1}^n$ for $S_F$.
  Choose $y_i \in S_Y$ such that $f_i(y_i) = 1$.

  Let $x \in S_X$. By assumption the norm of H is octahedral, so there exists
  a $T \in S_H$ such that
  \begin{equation*}
    \|y_i \otimes x + T\| > 2 - \delta(\eta)
  \end{equation*}
  holds for every $i \in \{1,...,n\}$.

  We want to show that $F$ is $(1+\varepsilon)$ isometric
  to a subspace of $X$.
  We have $\|T(f)\| \le \|f\|$ since $T$ has norm one.
  For $y_i$ we choose $\varphi_i \in S_{Y^*}$ such that
  \begin{equation*}
    \|\varphi_i(y_i)x + T(\varphi_i)\| > 2 - \delta(\eta).
  \end{equation*}
  By the triangle inequality $|\varphi_i(y_i)| > 1 - \delta(\eta)$
  and $\|T(\varphi_i)\| > 1 - \delta(\eta)$.
  We may assume that $\varphi_i(y_i) > 1 - \delta(\eta)$.
  Since $f_i(y_i) = 1$ we get $\|f_i - \varphi_i\| < \eta <
  \nu/2$.
  We also get
  \begin{equation*}
    \|T(f_i)\| \ge \|T(\varphi_i)\| - \|T\|\|f_i - \varphi_i\|
    > 1 - \delta(\eta) - \frac{\nu}{2} > 1 - \nu.
  \end{equation*}
  From \cite[Lemma~11.1.11]{alka}
  we see that $T$ restricted to $F$ is
  a $(1+\varepsilon)$ isometry.
\end{proof}

Using the above lemma we get the following result.

\begin{theorem}\label{teocentracontraeje-v2}
  For every $1 < p < 2$ and every natural number $n \ge 3$ the norms
  of both $\ell_1 \iten \ell_p^n$ and
  $L_1 \iten \ell_p^n$ fail to be octahedral.
\end{theorem}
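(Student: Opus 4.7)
The plan is to derive both statements directly from Lemma \ref{lemacontraopera}, combined with a classical non-embedding result for $\ell_q^n$ into $L_1$ when $q>2$ and $n\geq 3$. Since $1<p<2$, the conjugate exponent satisfies $p^*>2$, so $\ell_{p^*}^n$ is uniformly convex. For $Z\in\{\ell_1,L_1\}$, the injective tensor product $Z\iten \ell_p^n$ embeds naturally as a closed subspace of $L((\ell_p^n)^*,Z)=L(\ell_{p^*}^n,Z)$ containing the algebraic tensor product $Z\otimes\ell_p^n$. This places us precisely in the setting of Lemma \ref{lemacontraopera}, with $X:=Z$, $Y:=\ell_p^n$ and $H:=Z\iten \ell_p^n$.

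Arguing by contradiction, I would assume that the norm of $Z\iten \ell_p^n$ is octahedral. Lemma \ref{lemacontraopera} then yields that $\ell_{p^*}^n$ is finitely representable in $Z$. Because $\ell_{p^*}^n$ is finite-dimensional, Lemma \ref{lemma:char_fin_repr_L1} upgrades this to the statement that $\ell_{p^*}^n$ is isometric to a subspace of $L_1$, regardless of whether $Z=\ell_1$ or $Z=L_1$.

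The last step is to rule out this isometric embedding, which is where the hypothesis $n\geq 3$ becomes decisive (note that the $n=2$ case was in fact handled positively in Corollary \ref{2dimell1OH}, exploiting that every two-dimensional space does embed in $L_1$). By a classical theorem, $\ell_q^n$ embeds isometrically into $L_1$ if and only if $1\leq q\leq 2$ or $n\leq 2$; applied here with $q=p^*>2$ and $n\geq 3$, no such embedding exists. This contradiction settles both assertions at once. The main obstacle is essentially bookkeeping rather than conceptual: verifying that the natural inclusion $Z\iten \ell_p^n\hookrightarrow L(\ell_{p^*}^n,Z)$ is an isometry onto a subspace containing $Z\otimes \ell_p^n$ (which is immediate from the definition of the injective tensor norm) and pinning down the correct form of the non-embedding result for $n\geq 3$, both of which are standard.
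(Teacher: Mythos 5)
Your proof is correct and follows essentially the same route as the paper: the paper likewise applies Lemma~\ref{lemacontraopera} with $Y=\ell_p^n$ and $X\in\{\ell_1,L_1\}$, upgrades finite representability to an isometric embedding into $L_1$ via Lemma~\ref{lemma:char_fin_repr_L1}, and derives the contradiction from Dor's theorem (\cite[Theorem~1.5]{dor}), which is exactly the ``classical non-embedding result'' you invoke.
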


\begin{proof}
  From \cite[Theorem~1.5]{dor} we see that
  $\ell_{p^*}^n$ is not isometric to a subspace of $L_1[0,1]$.
  Combining Lemma~\ref{lemma:char_fin_repr_L1} and
  Lemma~\ref{lemacontraopera} we get the desired conclusion.
\end{proof}

Now, if we dualise Theorem~\ref{teocentracontraeje-v2} and use
\cite[Proposition~5.33]{rya}, we get the following corollary,
which gives a negative answer to an open
problem posed in \cite[p.~177]{blr} as well as in \cite{hlp}.

\begin{corollary}\label{contraproyec}
  Let $2<p<\infty$ and $n\geq 3$. Then neither $\ell_\infty\pten
  \ell_p^n$ nor $L_\infty\pten \ell_p^n$ enjoy the SD2P.
\end{corollary}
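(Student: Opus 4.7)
The proof plan is a direct duality argument building on Theorem \ref{teocentracontraeje-v2}. Fix $1 < q < 2$ and $n \geq 3$. By Theorem \ref{teocentracontraeje-v2} the norms of $\ell_1 \iten \ell_q^n$ and $L_1 \iten \ell_q^n$ fail to be octahedral. Recall from the introduction that for any Banach space $Z$, the norm of $Z$ is octahedral if and only if the dual $Z^*$ has the $w^*$-SD2P. Consequently, the duals $(\ell_1 \iten \ell_q^n)^*$ and $(L_1 \iten \ell_q^n)^*$ fail the $w^*$-SD2P.

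Next I would identify these duals explicitly via \cite[Proposition~5.33]{rya}. Since $\ell_q^n$ is finite-dimensional, the RNP and AP hypotheses typically required for such a duality identification are automatic, and we obtain
\begin{equation*}
(\ell_1 \iten \ell_q^n)^* = \ell_\infty \pten \ell_{q^*}^n
\quad \text{and} \quad
(L_1 \iten \ell_q^n)^* = L_\infty \pten \ell_{q^*}^n.
\end{equation*}
Setting $p = q^*$, the range $1 < q < 2$ corresponds exactly to $2 < p < \infty$, so $\ell_\infty \pten \ell_p^n$ and $L_\infty \pten \ell_p^n$ both fail the $w^*$-SD2P relative to the natural preduals just exhibited.

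To close the gap to the actual SD2P, I would observe that every weak-star slice of the unit ball of a dual space is, in particular, a norm slice, so any convex combination of weak-star slices is a convex combination of slices. Thus the SD2P implies the $w^*$-SD2P on any dual space, and by contraposition failure of the $w^*$-SD2P forces failure of the SD2P. This yields the corollary. There is no real obstacle: the heavy lifting is done by Theorem \ref{teocentracontraeje-v2}, and the remaining work is just bookkeeping with conjugate exponents and verifying that \cite[Proposition~5.33]{rya} applies, which it does trivially because the second factor is finite-dimensional.
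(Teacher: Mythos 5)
Your proposal is correct and follows essentially the same route as the paper: the paper's proof is precisely ``dualise Theorem~\ref{teocentracontraeje-v2} via \cite[Proposition~5.33]{rya}'', and you have simply spelled out the details (the identification of the duals, the equivalence of octahedrality of $Z$ with the $w^*$-SD2P of $Z^*$, and the trivial implication SD2P $\Rightarrow$ $w^*$-SD2P). No gaps.
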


The above theorem together with
Proposition~\ref{prop:operator-OHandAltOH} and
Proposition~\ref{lpohinfidime} allow us
to give the following characterisation of octahedrality when dealing
with classic sequence spaces.

\begin{theorem}\label{summary}
  Let $1\leq p\leq \infty$ and let $X$ be either $L_1$ or $\ell_1$. Then:
  \begin{enumerate}
  \item If $H$ is a closed subspace of $L(\ell_{p^*},X)$ which contains
    $\ell_p\otimes X$, then the norm of $H$ is octahedral if, and only
    if, $2\leq p$ or $p=1$.
  \item If $H$ is a closed subspace of $L(\ell_1,X)$ which contains
    $c_0\otimes X$, then the norm of $H$ is octahedral.
  \item If $n$ is a natural number and $H$ is a closed subspace of
    $L(\ell_{p^*}^n,X)$ which contains $\ell_p^n \otimes X$, then the
    norm of $H$ is octahedral if, and only if,
    either $n\leq 2$ or if $n>2$ and $2\leq p$ or $p=1$.
  \end{enumerate}
\end{theorem}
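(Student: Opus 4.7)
The plan is to exploit the fact that the theorem is essentially a catalogue of the sufficient and necessary conditions established earlier in the section, and to proceed by a case analysis in $p$ (and in $n$ for (3)), invoking one of the already proved results in each case.

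For the sufficient direction of (2), I would apply Proposition~\ref{prop:operator-OHandAltOH} with first factor $c_0$ (alternatively octahedral by the example preceding it) and second factor $X$, which is octahedral as a non-reflexive $L$-embedded space. For (1) I would split on $p$: at $p = 1$, apply Proposition~\ref{prop:operator-OHandAltOH} with first factor $\ell_1$ (octahedral, hence alternatively octahedral); at $2 \le p < \infty$, invoke Proposition~\ref{lpohinfidime}; and at $p = \infty$, note that $c_0 \otimes X \subseteq \ell_\infty \otimes X \subseteq H$ and reduce to (2). Statement (3) is treated by the analogous enumeration in the finite-dimensional setting: Corollary~\ref{2dimell1OH} for $n \le 2$; Proposition~\ref{posingeq3} for $n \ge 3$ and $2 \le p \le \infty$; and for $n \ge 3$ and $p = 1$, Proposition~\ref{prop:operator-OHandAltOH} applied with $\ell_1^n$ as the first factor, which is trivially alternatively octahedral since $\max(\|x+e_1\|_1,\|x-e_1\|_1)=1+\|x\|_1$ for every $x \in \ell_1^n$.

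The necessary direction in (1) with $1 < p < 2$ will follow from Lemma~\ref{lemacontraopera} applied with $Y = \ell_p$, whose dual $\ell_{p^*}$ is uniformly convex: octahedrality of $H$ would force $\ell_{p^*}$ to be finitely representable in $X \in \{\ell_1, L_1\}$, and hence in $L_1$; by Lemma~\ref{lemma:char_fin_repr_L1} it would then embed isometrically into $L_1[0,1]$, and so would every $\ell_{p^*}^n$, contradicting \cite[Theorem~1.5]{dor} already for $n = 3$ because $p^* > 2$. The corresponding failure in (3) with $n \ge 3$ and $1 < p < 2$ is just Theorem~\ref{teocentracontraeje-v2}.

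The hardest point to nail down will be the sufficient direction of (1) at $p = 1$: Proposition~\ref{prop:operator-OHandAltOH} a priori requires every $T \in H$ to be weak$^*$-weakly continuous, while a generic closed subspace $H \subseteq L(\ell_\infty, X)$ containing $\ell_1 \otimes X$ need not consist of such operators. I would address this by re-examining the proof of Proposition~\ref{prop:operator-OHandAltOH}: the continuity hypothesis is used only to ensure that the witnessing element $y$ can be produced inside $\ell_1 \otimes X$, while the operators $T_i$ themselves enter only through lower bounds of the form $\|(T_i + y)(f)\|$ at suitable test functionals $f \in S_{\ell_\infty}$, so no continuity of the $T_i$'s is actually required for the octahedral inequality to go through.
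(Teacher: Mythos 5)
Your overall architecture coincides with the paper's: Theorem~\ref{summary} is stated there with no written proof, precisely as an assembly of Proposition~\ref{prop:operator-OHandAltOH}, Proposition~\ref{lpohinfidime}, Theorem~\ref{teocentracontraeje-v2} and the earlier positive results. Your case analysis is the intended one, and most of it is correct: the reduction of $p=\infty$ in (1) to (2); the use of Proposition~\ref{lpohinfidime} for $2\le p<\infty$; the necessity direction via Lemma~\ref{lemacontraopera}, Lemma~\ref{lemma:char_fin_repr_L1} and Dor's theorem; Corollary~\ref{2dimell1OH} and Proposition~\ref{posingeq3} in (3); and the observation that $\ell_1^n$ is alternatively octahedral, which makes Proposition~\ref{prop:operator-OHandAltOH} apply legitimately in (3) for $p=1$, $n\ge 3$, since every operator on the finite-dimensional $\ell_\infty^n$ is automatically weak$^*$-weakly continuous.

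The genuine gap is your treatment of the weak$^*$-weak continuity hypothesis, and note first that the problem is not confined to (1) at $p=1$: it affects (2) and hence (1) at $p=\infty$ equally, since a closed subspace of $L(\ell_1,X)=L(c_0^*,X)$ containing $c_0\otimes X$ need not consist of weak$^*$-weakly continuous operators either. More importantly, your diagnosis of where the hypothesis enters the proof of Proposition~\ref{prop:operator-OHandAltOH} is wrong. The witness $S=w\otimes z$ lies in the algebraic tensor product automatically; the continuity is used at the very first step, to guarantee that $T_i^*y_i^*$ lies in the first factor rather than merely in its bidual, and it is the alternative octahedrality of the first factor \emph{applied to the vectors $T_i^*y_i^*$} that produces the test functionals $x_i^*$ used in the final estimate. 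So the claim that ``no continuity of the $T_i$'s is actually required'' does not follow from your reasoning, and as written the cases (1) with $p\in\{1,\infty\}$ and (2) are unproved. The statement is salvageable because for the two first factors you need, $\ell_1$ and $c_0$, alternative octahedrality persists at the bidual level: for $\mu_1,\ldots,\mu_n\in\ell_1^{**}=(\ell_\infty)^*$ write $\mu_i=a_i+s_i$ with $a_i\in\ell_1$ and $s_i\in c_0^{\perp}$, take $\xi_i\in S_{\ell_\infty}$ almost norming $\mu_i$, pick $N$ with $\sum_i|a_i(N)|$ small and redefine $\xi_i(N):=1$; then $w:=e_N\in S_{\ell_1}$ and the modified $\xi_i$ satisfy $|\langle\mu_i,\xi_i\rangle|>(1-\varepsilon)\|\mu_i\|$ and $\xi_i(w)=1$ (for $c_0$ the analogous statement for $\mu_i\in\ell_\infty$ is the example preceding Proposition~\ref{prop:operator-OHandAltOH}). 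With this substitute the proof of Proposition~\ref{prop:operator-OHandAltOH} goes through verbatim for these factors, but some such argument must actually be supplied.
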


Theorem~\ref{teocentracontraeje-v2} and Corollary~\ref{contraproyec}
also allow us to shed light on a number of results and questions
from the literature.

\begin{rem}
  In \cite[Question~(b)]{aln} it is asked how diameter two
  properties are preserved by tensor products.
  We can now provide a complete answer to this question for the SD2P
  in the projective case.
  The SD2P is preserved from both factors,
  by \cite[Corollary~3.6]{blr}, but not in general from one of them,
  by Corollary~\ref{contraproyec}.

  In \cite[Question~4.1]{llr} it is asked whether octahedrality is preserved
  by injective tensor products just from one of the factors.
  Theorem~\ref{teocentracontraeje-v2} gives a negative answer
  to this question.
\end{rem}

\begin{rem}
  Note that $L_\infty$ as well as $\ell_\infty$ have an
  infinite-dimensional centralizer \cite[Example~I.3.4.(h)]{hww}.
  From \cite[Corollary~3.8]{blr} and Corollary~\ref{contraproyec}
  we see that, given two Banach spaces $X$ and $Y$,
  it is not enough to assume that $X$ has an infinite-dimensional
  centralizer to ensure that $X\pten Y$ has the SD2P.
  But both $L_\infty$ and $\ell_\infty$ are isometric
  to $C(K)$ spaces so $L_\infty \pten Y$ and $\ell_\infty \pten Y$
  do have the D2P for any $Y$ by
  \cite[Proposition~4.1]{abr}.

  For some spaces we can say even more.
  Let $Y = \ell_p$ or $Y = \ell_p^n$ with $n$ a natural number
  and $1 < p < \infty$. By \cite[Proposition~5.33]{rya}
  we have $(\ell_1 \iten Y)^* = \ell_\infty \pten Y^*$.
  By \cite[Theorem~2.7]{hlp0} we get that
  the bidual $(\ell_1 \iten Y)^{**}$ is weakly octahedral.
  But, for $1 < p < 2$ and $n \ge 3$, $\ell_1 \iten Y$
  is not octahedral, by Theorem~\ref{teocentracontraeje-v2},
  hence $\ell_\infty \pten Y^*$ does not even have
  the $w^*$-SD2P (see e.g. \cite[Theorem~2.1]{blr0} or \cite[Theorem~2.2]{hlp0}).
\end{rem}

\begin{rem}\label{remdaugaytensor}
  Our results also give natural examples of
  tensor products failing the Daugavet property.

  By Theorem~4.2 and Corollary~4.3 in \cite{kkw} there exists
  a two dimensional complex Banach space $E$ such that
  both $L_1^{\mathbb C} \iten E$ and
  $L_\infty^{\mathbb C} \pten E^*$ fail the Daugavet property.
  % Proof: L_1 \iten E fail local diameter two property.
  Note that both real and complex $L_1$ and $L_\infty$
  have the Daugavet property.

  A Banach space with the Daugavet property has the SD2P
  and an octahedral norm
  (\cite[Corollary~2.5]{blr0} and \cite[Theorem~4.4]{aln}).
  We can thus improve the above mentioned results of \cite{kkw}
  by giving examples of (real) Daugavet spaces such that their
  tensor product fail to be octahedral
  or fail to have the SD2P.
  By Theorem~\ref{summary},
  $L_1\iten \ell_p^n$ does not have an octahedral norm for $1<p<2$ and $n\geq 3$, and
  by Corollary~\ref{contraproyec},
  $L_\infty \pten \ell_p^n$ does not have the SD2P for $2 < p < \infty$ and $n \ge 3$.
\end{rem}

\section{Octahedrality in projective tensor products}\label{sectionproje}

\bigskip

Given two Banach spaces $X$ and $Y$, no octahedrality
assumption is needed on $X$ or $Y$ in order for $X\pten Y$
to have an octahedral norm. Indeed, it follows from
\cite[Corollary~III.1.3]{hww} and the Principle of Local Reflexivity that
the norm of $\ell_2 \pten \ell_2$ is octahedral in spite of the fact that
$\ell_2$ is a Hilbert space. On the other hand, if we assume that one of the
factors is finite-dimensional, then the octahedrality of $X\pten Y$ forces
the other factor to have an octahedral norm.

\begin{proposition}\label{necefinidimen}
Let $X$ and $Y$ be Banach spaces. Assume that $Y$ is finite-dimensional and that $X\pten Y$ has an octahedral norm. Then $X$ has an octahedral norm.
\end{proposition}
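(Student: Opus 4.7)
The plan is to transfer octahedrality from $X\pten Y$ down to $X$ by exploiting the canonical isometric embedding $\iota:X\to X\pten Y$, $\iota(x)=x\otimes y_0$, for any fixed $y_0\in S_Y$. Using the finite-dimensionality of $Y$, extend $\{y_0\}$ to an Auerbach basis $y_0=y_1,\ldots,y_m$ of $Y$ with biorthogonal functionals $y_1^*,\ldots,y_m^*\in S_{Y^*}$; the norm-one operator $\sigma_j:X\pten Y\to X$ determined on elementary tensors by $x\otimes y\mapsto y_j^*(y)\,x$ provides a left inverse $\sigma_1$ of $\iota$, so $\iota(X)$ is a $1$-complemented isometric copy of $X$ inside $X\pten Y$.

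Given $x_1,\ldots,x_n\in S_X$ and $\varepsilon>0$, choose $\eta,\delta>0$ with $\eta+2\delta<\varepsilon$ and apply the equivalent characterisation of octahedrality from \cite[Proposition~2.1]{hlp0} (recalled in Section~\ref{sectioninje}) to $\{\iota(x_i)\}_{i=1}^n\subseteq S_{X\pten Y}$, obtaining $u\in S_{X\pten Y}$ with $\|\iota(x_i)+u\|>2-\eta$ for every $i$. Since $Y$ is finite-dimensional, $u$ decomposes uniquely as $u=c_1\otimes y_0+w$ with $c_1:=\sigma_1(u)\in X$, $\|c_1\|\le 1$, and $w:=\sum_{j\ge 2}\sigma_j(u)\otimes y_j$. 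The triangle inequality for the projective norm yields
\[
  2-\eta<\|\iota(x_i)+u\|=\|(x_i+c_1)\otimes y_0+w\|\le \|x_i+c_1\|+\|w\|,
\]
and similarly $\|c_1\|+\|w\|\ge \|u\|=1$. Consequently, if the witness $u$ can be arranged so that $\|w\|<\delta$, then $\|c_1\|\ge 1-\delta$, and setting $y:=c_1/\|c_1\|\in S_X$ one estimates $\|y-c_1\|=1-\|c_1\|\le\delta$ together with
\[
  \|x_i+y\|\ge \|x_i+c_1\|-\|y-c_1\|>2-\eta-2\delta>2-\varepsilon,
\]
which, via the same characterisation, gives the required octahedrality of $X$.

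The main obstacle is to ensure that the octahedrality witness $u$ can indeed be chosen with $\|w\|$ arbitrarily small; equivalently, that $u$ can be taken close in the projective norm to the subspace $X\otimes y_0$. I expect this to be achievable by enlarging the finite-dimensional subspace to which octahedrality of $X\pten Y$ is applied --- for instance, by including suitably chosen tensors of the form $a\otimes y_j$ with $j\ge 2$ --- so as to force the witness to be approximately $\ell_1$-orthogonal to each of the directions $y_2,\ldots,y_m$. The finite-dimensionality of $Y$ enters essentially at this step, both through the existence of an Auerbach basis and through the identification $X\otimes Y=X\pten Y$ with only finitely many coordinates, which makes the decomposition $u=c_1\otimes y_0+w$ meaningful at the level of the full tensor product rather than only on a dense subspace.
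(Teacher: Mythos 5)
Your reduction --- decompose the witness $u$ along an Auerbach basis of $Y$ and show that if its component along $y_0$ carries almost all of the norm then $c_1/\|c_1\|$ witnesses octahedrality in $X$ --- is fine as far as it goes, but the proof is incomplete at precisely the point you flag: you never establish that the witness can be chosen with $\|w\|$ small, and the strategy you sketch for doing so cannot work. Consider $X\pten\ell_1^2=X\oplus_1 X$ with $y_0=e_1$: for any finite collection of elements of $X\otimes y_0=X\oplus 0$, \emph{every} element $(0,y)$ with $y\in S_X$ is an exact octahedrality witness, since $\|(x,0)+(0,y)\|=\|x\|+\|y\|$. Hence enlarging the finite set inside $X\otimes y_0$ can never force the witness towards $X\otimes y_0$. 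Adding tensors $a\otimes y_j$ with $j\ge2$ makes matters worse rather than better: the requirement $\|a\otimes y_j+u\|>2-\eta$ forces $u$ to \emph{align} with the direction $y_j$ (in the example above it forces $\|a+u_2\|\approx\|a\|+\|u_2\|$), not to be almost $\ell_1$-orthogonal to it.

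The paper closes this gap by a different device, namely \cite[Proposition~3.2]{pr}: when $Y$ is finite-dimensional, octahedrality of $X\pten Y$ yields an \emph{elementary} tensor witness $u\otimes v\in S_X\otimes S_Y$ with $\|x_i\otimes y+u\otimes v\|\ge 2-\varepsilon$ simultaneously for all $y\in S_Y$ and all $i$. The key difference from your set-up is that the direction $v$ is produced by the witness rather than fixed in advance; evaluating at $y=v$ gives $\|x_i\otimes v+u\otimes v\|=\|x_i+u\|\,\|v\|=\|x_i+u\|$, and octahedrality of $X$ follows at once. (In the $X\oplus_1 X$ example this amounts to accepting the witness $(0,y)=y\otimes e_2$ and reading off the conclusion in the $e_2$-direction instead of insisting on $e_1$.) Unless you supply an independent proof of such an elementary-tensor (or small-$\|w\|$) selection result, your argument does not yet establish the proposition.
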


\begin{proof}
  Pick $x_1,\ldots, x_n\in S_X$ and $\varepsilon>0$.
  Since $X\pten Y$ has an octahedral norm and $Y$ is finite-dimensional
  we can find by \cite[Proposition 3.2]{pr} $u\otimes v\in S_X\otimes S_Y$ such that
  \begin{equation*}
    \Vert x_i\otimes y+u\otimes v\Vert\geq 2-\varepsilon
  \end{equation*}
  holds for all $y\in Y$ and $i\in\{1,\ldots, n\}$.
  Now, given $i\in\{1,\ldots, n\}$, we have
  \begin{equation*}
    2-\varepsilon \leq
    \Vert x_i\otimes v + u\otimes v\Vert
    =
    \Vert x_i+u\Vert\Vert v\Vert
    =
    \Vert x_i+u\Vert.
  \end{equation*}
  Hence, $X$ has an octahedral norm, as desired.
\end{proof}

\begin{lemma}\label{lemma:alapropPaiideal}
  Let $X$ and $Z$ be Banach spaces.
  If $Z$ is an ai-ideal in $X$ and, for every $z_1,\ldots, z_n\in S_Z$ there exists
   $v \in S_X$ such that
  \begin{equation*}
    \|z_i + v\| = \|z_i\| + \|v\| \quad \mbox{for all}\ i\in\{1,\ldots, n\},
  \end{equation*}
  then the norm of $Z$ is octahedral.
\end{lemma}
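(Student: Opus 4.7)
The plan is to combine the hypothesis about the existence of a norming element $v\in S_X$ with the ai-ideal property of $Z\subseteq X$. The hypothesis produces $v$ in the big space $X$, while octahedrality of $Z$ asks for a witness in $Z$; the ai-ideal structure is exactly what lets us transfer $v$ into $Z$ while almost preserving norms and fixing the $z_i$.

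Fix $z_1,\dots,z_n\in S_Z$ and $\varepsilon>0$, and choose $\varepsilon'>0$ with $3\varepsilon'<\varepsilon$. By hypothesis there exists $v\in S_X$ such that $\|z_i+v\|=2$ for every $i\in\{1,\dots,n\}$. Consider the finite-dimensional subspace $E:=\linspan\{z_1,\dots,z_n,v\}\subseteq X$. Since $Z$ is an ai-ideal in $X$, there is a linear operator $T:E\to Z$ with $T(z_i)=z_i$ for each $i$ and
\begin{equation*}
  (1-\varepsilon')\|e\|\le\|T(e)\|\le(1+\varepsilon')\|e\|\quad\text{for all } e\in E.
\end{equation*}

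Set $w:=T(v)/\|T(v)\|\in S_Z$; this is well defined because $\|T(v)\|\ge 1-\varepsilon'>0$. For each $i$, using $T(z_i+v)=z_i+T(v)$ and the lower estimate above,
\begin{equation*}
  \|z_i+T(v)\|=\|T(z_i+v)\|\ge(1-\varepsilon')\|z_i+v\|=2(1-\varepsilon').
\end{equation*}
Moreover, $\bigl|\|T(v)\|-1\bigr|\le\varepsilon'$, which gives $\|T(v)-w\|=\bigl|\|T(v)\|-1\bigr|\le\varepsilon'$. Combining the two estimates,
\begin{equation*}
  \|z_i+w\|\ge\|z_i+T(v)\|-\|T(v)-w\|\ge 2(1-\varepsilon')-\varepsilon'=2-3\varepsilon'>2-\varepsilon.
\end{equation*}
Since $\varepsilon>0$ and $z_1,\dots,z_n\in S_Z$ were arbitrary, the characterisation of octahedrality recalled after the definition of alternative octahedrality (i.e.\ \cite[Proposition~2.1]{hlp0}) yields that the norm of $Z$ is octahedral.

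No genuine obstacle is anticipated: the argument is a clean two-step estimate where the only delicate point is keeping track of the relation between $\varepsilon$ and the distortion $\varepsilon'$ of the ai-ideal approximation, together with the normalisation of $T(v)$.
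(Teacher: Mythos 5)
Your proof is correct and follows essentially the same route as the paper's: apply the ai-ideal property to $E=\linspan\{z_1,\dots,z_n,v\}$, normalise $T(v)$, and run the two-step estimate. The only cosmetic difference is that you separate the distortion parameter $\varepsilon'$ from the target $\varepsilon$, whereas the paper uses a single $\varepsilon$ and lands on $2-3\varepsilon$ directly.
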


\begin{proof}
  Let $z_1,\ldots,z_n \in S_Z$, $\varepsilon > 0$ and $v$ as in the hypothesis of the lemma.
  Define
  $E := \linspan\{v, z_1,\ldots,z_n\}$.
  Find $T : E \to Z$ such that
  $T(e) = e$ for all $e \in E \cap Z$
  and
  \begin{equation*}
    (1-\varepsilon)\|e\| \le \|T(e)\| \le (1+\varepsilon)\|e\|
    \quad \mbox{for all}\ e \in E.
  \end{equation*}
  Let $q = \frac{T(v)}{\|T(v)\|} \in S_Z$.
  We have
  \begin{align*}
    \|z_i + q\| &\ge \|z_i + T(v)\| - \varepsilon
    = \|T(z_i + v)\| - \varepsilon \\
    &\ge (1-\varepsilon)\|z_i + v\| - \varepsilon
    = 2 - 3\varepsilon,
  \end{align*}
  which means that the norm of $Z$ is octahedral.
\end{proof}

The following theorem provides a partial positive answer to
\cite[Question~4.4]{llr}, where it is asked whether octahedrality
is preserved by taking projective tensor products from one of the factors.

\begin{theorem}\label{teolsumaoctapten}
  Let $X$ be a non-reflexive $L$-embedded space and $Y$ be a Banach
  space. If either $X^{**}$ or $Y$ has the MAP then $X\pten Y$ has an
  octahedral norm.
\end{theorem}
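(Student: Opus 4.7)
The plan is to combine Proposition~\ref{normingaitensor} with Lemma~\ref{lemma:alapropPaiideal}. Under the MAP hypothesis, Proposition~\ref{normingaitensor} gives that $X\pten Y$ is an ai-ideal in $X^{**}\pten Y$, so by the lemma it suffices to show that for every $u_1,\ldots,u_n\in S_{X\pten Y}$ there exists a single $v\in S_{X^{**}\pten Y}$ such that $\|u_i+v\|=\|u_i\|+\|v\|$ for every $i$.

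To build $v$, I would exploit the $L$-embedding $X^{**}=X\oplus_1 W$; here $W\neq\{0\}$ because $X$ is non-reflexive. Choose any $x^{**}\in S_W$ and any $y\in S_Y$, and set $v:=x^{**}\otimes y$, which a priori satisfies $\|v\|_{X^{**}\pten Y}\le 1$. The heart of the argument is to produce, for each $i$, a norming operator $T_i\in B_{L(X^{**},Y^*)}=B_{(X^{**}\pten Y)^*}$ that achieves the value $2$ on $u_i+v$. Starting from a norming operator $T_{0,i}\in B_{L(X,Y^*)}$ for $u_i$, i.e.\ $\langle T_{0,i},u_i\rangle=1$, I would use Hahn-Banach to pick $\lambda\in S_{W^*}$ with $\lambda(x^{**})=1$ and $\psi\in S_{Y^*}$ with $\psi(y)=1$, and define $T_i\colon X^{**}\to Y^*$ by
\begin{equation*}
  T_i(x+w):=T_{0,i}(x)+\lambda(w)\,\psi,\qquad x\in X,\ w\in W.
\end{equation*}
The key observation is that the $\ell_1$-decomposition $X^{**}=X\oplus_1 W$ identifies $L(X^{**},Y^*)$ isometrically with $L(X,Y^*)\oplus_\infty L(W,Y^*)$, so $\|T_i\|=\max\{\|T_{0,i}\|,\|\lambda\|\|\psi\|\}\le 1$. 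Evaluating gives
\begin{equation*}
  \langle T_i,u_i+v\rangle
  =\langle T_{0,i},u_i\rangle+\lambda(x^{**})\,\psi(y)
  =2,
\end{equation*}
which forces $\|u_i+v\|=2=\|u_i\|+\|v\|$ (the reverse inequality is trivial and also yields $\|v\|=1$). Lemma~\ref{lemma:alapropPaiideal} then delivers octahedrality of $X\pten Y$.

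The main obstacle is more conceptual than technical: $X^{**}\pten Y$ does not inherit any literal $\ell_1$-decomposition from $X^{**}=X\oplus_1 W$, and any attempt to work with such a splitting at the tensor level seems hopeless. The point is that one does not need it. The $L$-structure of $X^{**}$ transfers at the dual level to an $\ell_\infty$-decomposition of $L(X^{**},Y^*)$, which is precisely the flexibility required to choose the $X$- and $W$-components of the norming functionals independently. This is what allows a single tensor $v=x^{**}\otimes y$ to witness the additivity simultaneously for all $u_i$, and the MAP hypothesis enters only through Proposition~\ref{normingaitensor} to transport octahedrality from the ambient bidual tensor product down to $X\pten Y$ via the ai-ideal mechanism of Lemma~\ref{lemma:alapropPaiideal}.
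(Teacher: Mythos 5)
Your proof is correct, and it shares the paper's skeleton: the same witness $v=x^{**}\otimes y$ with $x^{**}$ taken from the $L$-summand $W$ of $X^{**}=X\oplus_1 W$, and the same reduction to $X^{**}\pten Y$ via Proposition~\ref{normingaitensor} followed by Lemma~\ref{lemma:alapropPaiideal}. Where you genuinely diverge is in how the additivity $\Vert u_i+v\Vert_{X^{**}\pten Y}=\Vert u_i\Vert+\Vert v\Vert$ is certified. The paper first establishes it inside the small space $X_u\pten Y$, where $X_u=\linspan\{X,u\}=X\oplus_1\mathbb{R}$, by extending a norming operator $T\in S_{L(X,Y^*)}$ to $\bar T(x+\lambda u)=T(x)+\lambda y^*$; it must then transfer the identity up to $X^{**}\pten Y$, which costs an extra step: $X_u$ is an ideal in $X^{**}$ (via $X_u^*\cong X^*\oplus_\infty\mathbb{R}\hookrightarrow X^*\oplus_\infty Z^*=X^{***}$ and Fakhoury's theorem), whence $X_u\pten Y$ embeds isometrically in $X^{**}\pten Y$ by Proposition~\ref{subprojeai}. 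You bypass that transfer entirely by building the norming operator $T_i$ directly on all of $X^{**}$, using the isometric identification $L(X\oplus_1 W,Y^*)\cong L(X,Y^*)\oplus_\infty L(W,Y^*)$ --- the same $\ell_1$/$\ell_\infty$ duality the paper exploits on $X_u$, applied to the full decomposition rather than a one-dimensional slice of $W$. The computation $\langle T_i,u_i+v\rangle=\langle T_{0,i},u_i\rangle+\lambda(x^{**})\psi(y)=2$ with $\Vert T_i\Vert\le 1$ is airtight (note that $T_i|_X=T_{0,i}$, so the pairing with $u_i\in X\pten Y$ is unchanged), and together with the triangle inequality it forces both $\Vert u_i+v\Vert=2$ and $\Vert v\Vert=1$. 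Your route is the shorter one: it makes Proposition~\ref{subprojeai} and the appeal to Fakhoury's theorem unnecessary for this particular theorem, at no loss of generality.
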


\begin{proof}
  Since $X$ is a non-reflexive $L$-embedded Banach space then
  $X^{**}=X\oplus_1 Z$ for some non-zero subspace $Z$ of $X^{**}$,
  hence $X^{***}=X^*\oplus_\infty Z^*$.

  Let $u \in S_Z$, $y \in S_X$, and $y^* \in S_{Y^*}$
  such that $y^*(y) = 1$ and define $v = u \otimes y$.
  Denote by $X_u = \linspan\{X,u\} = X \oplus_1 \mathbb{R}$.
  By the triangle inequality
  $\|z + v\| \le \|z\| + \|v\|$ in $X_u \pten Y$
  for all $z \in X \pten Y$.
  First we will show that we in fact have equality here.

  To this aim let $z\in X\pten Y$ and pick $T\in S_{L(X,Y^*)}$
  such that $\langle T, z \rangle=\|z\|$.
  Define $\bar{T} : X_u \to Y^*$ by the equation
  \begin{equation*}
    \bar{T}(x+\lambda u)=T(x)+\lambda y^*.
  \end{equation*}
  We claim that $\|\bar{T}\|\leq 1$. Indeed, given an arbitrary
  $x + \lambda u \in X_u$, one has
  \begin{equation*}
    \| \bar{T}(x+\lambda u) \|
    = \| T(x) + \lambda y^* \|
    \leq \|T(x)\| + |\lambda|
    \leq \|x\| + |\lambda|
    = \| x + \lambda u \|.
  \end{equation*}
  Consequently, it follows that
  \begin{equation*}
    \Vert z + v \Vert
    \geq \langle \bar{T}, z + v \rangle
    = \langle T, z \rangle + \langle \bar{T}, v\rangle
    = \|z\| + y^*(y)
    = \|z\| + 1
    = \|z\| + \|v\|.
  \end{equation*}

  We have that $X_u^*$ is isometric to $X^*\oplus_\infty\mathbb R$,
  which is an isometric subspace of $X^*\oplus_\infty Z^*=X^{***}$.
  This implies the existence of a Hahn-Banach operator
  $\varphi:X_u^* \to X^{***}$
  hence $X_u$ is an ideal in $X^{**}$
  \cite[Th{\'e}or{\`e}me~2.14]{Fak}.
  By Proposition~\ref{subprojeai} we
  conclude that $X_u\pten Y$ is an isometric subspace of
  $X^{**}\pten Y$, so
  \begin{equation*}
    \|z + v\|_{X^{**}\pten Y}
    = 1 + \| z\|_{X^{**}\pten Y}
  \end{equation*}
  holds for every $z \in X \pten Y$.
  By Proposition~\ref{normingaitensor}, $X\pten Y$ is an ai-ideal in
  $X^{**}\pten Y$, so Lemma~\ref{lemma:alapropPaiideal} finishes the
  proof.
\end{proof}

For general Banach spaces $X$ and $Y$
the question of whether $X \pten Y$ has an octahedral
norm whenever $X$ and/or $Y$ do remains open. We note
that it is enough to consider separable Banach spaces
to answer this question. This follows by using
\cite[Theorem~1.5]{abrahamsen} and Proposition~\ref{subprojeai}.

\end{document}